\numberwithin{equation}{section}
\theoremstyle{plain}
\newtheorem{theorem}{Theorem}[section]
\newtheorem{proposition}[theorem]{Proposition}
\newtheorem{lemma}[theorem]{Lemma}
\newtheorem{corollary}[theorem]{Corollary}
\theoremstyle{remark}
\newtheorem{remark}[theorem]{\bf Remark}
\theoremstyle{definition}
\newtheorem{definition}[theorem]{Definition}
\newtheorem{example}[theorem]{Example}
\begin{document}

	\title{Graded isomorphisms of Leavitt path algebras and Leavitt inverse semigroups}
	
	\author{Huanhuan Li}
	\address{ Center for Pure Mathematics\\School of Mathematical Sciences\\Anhui University, Hefei, 230601, China}
    \email{lhh@ahu.edu.cn}
	
	\author{Zongchao Li}
	\address{School of Mathematical Sciences\\Anhui University, Hefei, 230601, China}
	\email{leezongc@163.com}

	\author{Zhengpan Wang}
	\address{School of Mathematics and Statistics\\Southwest University, Chongqing, 400715, China}
	\email{zpwang@swu.edu.cn}

	\subjclass[2010]{20M18, 16S88}
	
	\keywords{Leavitt path algebra, Leavitt inverse semigroup, graded isomorphism}
	
	\date{\today}

	\begin{abstract}
		 Leavitt inverse semigroups of directed finite graphs are related to Leavitt graph algebras of (directed) graphs. Leavitt path algebras of graphs have the natural  $\mathbb  Z$-grading via the length of paths in graphs.  We consider the $\mathbb  Z$-grading on Leavitt inverse semigroups. For connected finite graphs having vertices out-degree at most $1$, we give a combinatorial sufficient and necessary condition on graphs to classify the corresponding Leavitt path algebras and  Leavitt inverse semigroups up to graded isomorphisms. More precisely, the combinatorial condition on two graphs coincides if and only if the Leavitt path algebras of the two graphs are $\mathbb  Z$-graded isomorphic if and only if the Leavitt inverse semigroups of the two graphs are $\mathbb  Z$-graded isomorphic.
	\end{abstract}
	
	\maketitle
	\section{Introduction}
	Leavitt path algebras were introduced  by Ara, Moreno and Pardo \cite{r5}, and Abrams and Aranda Pino \cite{r6} independently. Leavitt path algebras are an outgrowth of a class of algebras defined by Leavitt \cite{r8}, called Leavitt algebras. A Leavitt path algebra associating to a directed graph is a $\mathbb  Z$-graded algebra. Hazrat \cite{r9} studied this $\mathbb  Z$-grading and characterized the $\mathbb  Z$-graded algebraic structure of Leavitt path algebras associated to polycephaly graphs via graded matrix rings. Hazrat and Mesyan \cite{r10} systematically developed the theory of graded semigroups, that is, semigroups $S$ partitioned by a group $G$, in a manner compatible with the multiplication on $S$.
	
	The authors of \cite{r1} introduced a class of inverse semigroups for directed graph that they refer to as Leavitt inverse semigroups.  They proved that for two connected graphs $E$ and $F$ whose vertices have out-degree at most 1, the Leavitt inverse semigroups of $E$  and $F$ are isomorphic if and only if the Leavitt path algebras of $E$ and $F$ are isomorphic as algebras if and only if $E$ and $F$ have the same number of vertices; \cite[Theorem 4.7]{r1}. Leavitt path algebras have the natural $\mathbb Z$-grading which is given by  the length of paths  in graphs. We explore the natural $\mathbb Z$-grading on the Leavitt inverse semigroups of graphs.
    Under the same condition as \cite[Theorem 4.7]{r1}, we prove that  the Leavitt inverse semigroups of $E$  and $F$ are  $\mathbb Z$-graded  isomorphic if and only if the Leavitt path algebras are  $\mathbb Z$-graded isomorphic.  We also present a combinatorial sufficient and necessary condition on graphs to classify the corresponding Leavitt path algebras and  Leavitt inverse semigroups up to graded isomorphisms.
	
 	
	This note is structured as follows. In  section \ref{section1}, we recall definitions of Leavitt path algebras and Leavitt inverse semigroups of finite graphs. We give $\mathbb Z$-grading on Leavitt inverse semigroups. In section \ref{section3}, we give a combinatorial sufficient and necessary condition to classify Leavitt path algebras and  Leavitt inverse semigroups of finite connected graphs whose vertices have out-degree at most $1$ up to graded isomorphisms; see Theorem \ref{theorem}.

	\section{Graded inverse semigroups and Leavitt path algebras}
\label{section1}

In this section, we recall definitions of Leavitt path algebras and Leavitt inverse semigroups of finite graphs. We also consider $\mathbb Z$-grading on Leavitt inverse semigroups.
\subsection{Graphs}
	 
	A directed graph $E = (E^{0}, E^{1}, s, r)$ consists of two sets $E^{0}, E^{1}$ and two maps $s ,r : E^1 \to E^{0}$. We denote the set of vertices by $E^{0}$ and the set of edges by $E^{1}$. The maps $s$, $r$ can be respectively called the source mapping and the range mapping. For each vertex $v\in E^{0}$, $s^{-1}(v) = \left\lbrace e \in E^{1} | s(e) = v \right\rbrace $ and the out-degree of a vertex $v$ is $|s^{-1}(v)|$. A vertex $v\in E^{0}$ with out-degree 0 is called a sink. A path $p$ in a graph $E$ is a sequence $p = e_{1}e_{2}\dots e_{n}$ of edges $e_{i}\in E^{1}$ such that $r(e_{i})=s(e_{i+1})$, $i = 1,2,\dots,n-1$. In this case, $s(p) = s(e_{1})$ is the source of $p$, $r(p) = r(e_{n})$ is the range of $p$, and $n=|p|$ is the length of $p$. We can consider the vertices of $E$ as empty paths and these empty paths have length $0$. For $n \geq 2$ we define $E^{n}$ as a set of paths in $E$ of length $n$, and define $\text{Path}(E) = \bigcup_{n\geq 0}E^{n}$, the set of all paths in $E$. 
 In this note, we will only consider directed finite graphs, namely a directed graph has finitely many vertices and finitely many edges.

	
	For the convenience of study, the notation can be extended to allow paths in which edges are read in either the positive or negative direction. To do this, we associate with each edge $e$ an inverse edge $e^{*}$. We denote the set $\left\lbrace e^{*} | e \in E^{1} \right\rbrace $ by $(E^{1})^{*}$ and define $s(e^{*})= r(e) $ and $r(e^{*})=s(e)$. With this convention, we define a walk in $E$ as a sequence $q = e_{1}e_{2}\dots e_{n}$ with edges $e_{i} \in E^{1}\bigcup (E^{1})^{*}$ such that $r(e_{i})=s(e_{i+1})$ for $i = 1,2,\dots,n-1$. Similarly we define $s(q) = s(e_{1})$ and $r(q) = r(e_{n})$. We say that graph $E$ is connected if for all $ v$, $w \in E^{0}$ there exists at least one walk $q$ with $s(q)=v$ and $r(q)=w$. A walk $p$ based at $v$ is a circuit if $s(p) = r(p) = v.$ A path $p = e_{1}e_{2}\dots e_{n}$ is said to be a cycle if $p$ is a circuit and $s(e_{i}) \neq s(e_{j})$ for every $i\neq j$.  
 Two cycles $C_{1}$ and $C_{2}$ are said to be conjugate if $C_{1} = e_{1}e_{2}\dots e_{n}$ and $C_{2} = e_{i}e_{i+1}\dots e_{n}e_{1}\dots e_{i-1}$ for some $i$. A walk $q = e_{1}e_{2}\dots e_{n}$ is called reduced if $e_{i} \neq e_{i+1}^{*}$ for each $i$. A reduced circuit is a circuit $q = e_{1}e_{2}\dots e_{n}$ that is a reduced walk and such that $e_{1} \neq e_{n}^{*}$. The graph $E$ is said to be acyclic if it has non-trivial cycles. $E$ is called a tree if it is connected and has no non-trivial reduced circuits.  
	
	\begin{proposition} \cite[Proposition 2.1]{r1}
		\label{prop1}
		Let $E$ be a connected graph whose vertices have out-degree at most 1. Then
		
		(a) $E$ has at most one sink. If $E$ does have a sink $v_{0}$ and $v$ is any other vertex in $E$, then there is a unique path from $v$ to $v_{0}$ and $E$ is a tree.
		
		(b) If $E$ is not a tree then $E$ has a non-trivial cycle and any two non-trivial cycles are cyclic conjugates of each other. Furthermore, if $v'$ is any vertex on one of these cycles $C$ and $v$ is any other vertex of $E$ then there is a unique path from $v$ to $v'$ that does not include the cycle $C$ as a subpath.
	\end{proposition}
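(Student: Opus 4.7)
The plan rests on one structural observation about reduced walks when every vertex has out-degree at most one: any reduced walk $q$ factors as a product of forward edges followed by inverse edges, i.e.\ $q=e_1\cdots e_k f_1^*\cdots f_m^*$ with $e_i,f_j\in E^1$. Indeed, an inverse-then-forward transition $e_i^* e_{i+1}$ would force $s(e_i)=r(e_i^*)=s(e_{i+1})$, so $e_i=e_{i+1}$ by the out-degree hypothesis, violating reducedness. I would establish this observation first, since both parts of the proposition reduce to it.

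For (a), if $v_0,v_1$ are distinct sinks, a reduced walk between them can contain no initial forward edge (since $v_0$ is a sink) and no terminal inverse edge $f_m^*$ (else $s(f_m)=v_1$ would have positive out-degree); hence the walk is trivial and $v_0=v_1$. If a sink $v_0$ exists, the same reasoning forces any reduced walk from $v$ to $v_0$ to be purely forward, and uniqueness of this path is automatic since the out-edge at each vertex is forced. It remains to show $E$ is a tree. A non-trivial reduced circuit of the form $e_1\cdots e_k f_1^*\cdots f_m^*$ cannot have both $k>0$ and $m>0$: the base-point condition $s(e_1)=s(f_m)$ would force $e_1=f_m$, violating the circuit-reducedness requirement $e_1\neq (f_m^*)^*$. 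So a non-trivial reduced circuit is either a pure forward cycle or a pure inverse string; either case produces a vertex whose forward iteration loops and therefore never reaches $v_0$, contradicting the unique forward path just established.

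For (b), the same dichotomy shows that, when $E$ is not a tree, it admits a non-trivial reduced circuit which is purely forward or purely inverse; in either case iterating the unique out-edge from a vertex of this circuit loops after finitely many steps, producing a cycle. To show any two non-trivial cycles $C_1,C_2$ are cyclic conjugates, I would take a reduced walk from $u_1\in C_1$ to $u_2\in C_2$ and unwind its inverse tail $f_1^*\cdots f_m^*$ from the far end: since $s(f_m)=u_2\in C_2$, the unique out-edge of $u_2$ is $f_m$, so $f_m\in C_2$ and $r(f_m)\in C_2$; iterating inwards, every $f_i$ lies in $C_2$ and in particular $r(f_1)\in C_2$. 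But $r(f_1)$ coincides with the last vertex of the forward part, which lies in $C_1$ because forward walks starting in $C_1$ stay in $C_1$. Hence $C_1\cap C_2\neq\emptyset$, and then out-degree $\leq 1$ at the common vertex propagates forward to force the entire cyclic sequence of edges to agree. For the final assertion, given $v'\in C$ and $v\neq v'$, I would first observe that in this case $E$ has no sink (an analogous backwards-tracing argument would otherwise force the sink onto $C$); then iterating the unique out-edge from $v$ eventually enters $C$ (the only available cycle) and runs along $C$ for strictly fewer than $|C|$ steps to reach $v'$, thereby avoiding $C$ as a subpath, and uniqueness of such a path follows once more from the forced choice of out-edge at every vertex.

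The step I expect to be most delicate is the uniqueness of cycles up to cyclic conjugation. A naive attempt to connect $C_1$ to $C_2$ by paths in $E^1$ fails because cycle vertices have their out-edges trapped within their own cycle. The correct maneuver is to exploit the reduced walk form and trace the inverse tail backwards, using the out-degree bound at every intermediate vertex to pin each $f_i$ inside $C_2$; this is where the full force of the structural lemma on reduced walks is needed, and once it is in place the remaining assertions of (b) follow by routine bookkeeping.
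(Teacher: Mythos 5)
The paper itself gives no proof of this proposition—it is quoted directly from \cite[Proposition 2.1]{r1}—so there is no in-text argument to compare against; judged on its own, your proof is correct and rests on the right structural fact, namely that under the out-degree-at-most-one hypothesis every reduced walk has the normal form (forward edges)$\cdot$(inverse edges), from which (a) and (b) follow as you describe. Two places deserve tightening: in part (b) the observation that $E$ has no sink should simply be deduced from the already-proved part (a) (a sink forces $E$ to be a tree), rather than from the vague ``backwards-tracing'' parenthetical; and in the final uniqueness claim you should say explicitly that, since the out-edge at every vertex is forced, the paths issuing from $v$ are totally ordered by the prefix relation, so any two paths from $v$ to $v'$ differ by full circuits of the cycle based at $v'$, and hence only the shortest one avoids a complete traversal of the cycle (the phrase ``includes the cycle $C$ as a subpath'' must be read up to cyclic conjugation for the uniqueness statement to be literally true). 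Note also that your existence argument for the last claim (``the forward orbit of $v$ eventually enters $C$'') uses finiteness of $E$, which is the paper's standing assumption; alternatively, the same reduced-walk normal form plus your inverse-tail tracing shows directly that the forward part of a reduced walk from $v$ to $v'$ already lands on the cycle, avoiding any appeal to finiteness.
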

	

	In the following we give two simple graphs whose vertices have out-degree at most $1$.
	
	\begin{example}
		\begin{equation*}
			{\def\labelstyle{\displaystyle}
				\xymatrix{	
					\qquad		 v_{3}\ar[dr]^{e_{2}} &\\
					E_{1}:	\qquad	&v_{2}\\
					\qquad		 v_{1}\ar[ur]^{e_{1}}&\\
			} }	\qquad \qquad
			{\def\labelstyle{\displaystyle}
				\xymatrix{
					\\
					E_{2}:	w_{1}\ar[r]^{f_{1}} &w_{2}\ar@/^/[r]^{f_{2}}&w_{3}\ar@/^/[l]^{f_{3}}	
			} }	
		\end{equation*}
	
	\end{example}

	\subsection{Leavitt inverse semigroups and Leavitt path algebras}

	Let $K$ be a field and $E$ be a finite graph. Now we recall the definition of Leavitt inverse semigroup of $E$.
	
	\begin{definition}\label{def1}  (\cite[Theorem 1.2]{r12},\cite[\S{4}]{r1})
		The Leavitt inverse semigroup of $E$, denoted by $LI(E)$, is the semigroup with zero generated by the $E^{0} \cup E^{1}\cup (E^{1})^{*}$, subject to the following relations :
  
  \noindent(1) $s(e)e=er(e)=e$ for all $e\in E^{0}\cup E^1\cup (E^1)^*$;\\
		(2) $uv=0$ if $u,v\in E^0$ and $u\neq v$;\\
		(3) $e^{*}f=0$ if $e,f\in E^1$ and $e\neq f$;\\
		(4) $e^*e=r(e)$ if $e\in E^{1}$;\\
		 (5) $v= ee^{*}$ for each $v\in E^{0}$ of out-degree 1.
	\end{definition}	

 Now we recall the forms of elements in the Leavitt inverse semigroups.

    \begin{lemma}\label{lem forms}  \cite[Theorem 4.2]{r1}
       Every element of $LI(E)$ is uniquely expressible in one of the forms

        (a) $pq^*$ where $p=e_{1}\dots e_{n}$ and $q=f_{1}\dots f_{m}$ are (possibly empty) paths with $r(e_n)=r(f_m)$ and $e_n \neq f_m$; or

        (b) $pq^*= p'ee^{*}q'^{*}$ where $p'$ and $q'$ are (possibly empty) paths with $r(p') = r(q')$ and the vertex $s(e)=r(p') = r(q')$ has out-degree at least 2.
    \end{lemma}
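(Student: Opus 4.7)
The plan is to prove existence and uniqueness of the normal form separately.

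For existence, I would start with an arbitrary nonzero word in the generators $E^{0}\cup E^{1}\cup (E^{1})^{*}$ and apply relations (1)--(5) as a rewriting system. Relation (3) kills every substring $e^{*}f$ with $e\neq f$, relation (4) replaces $e^{*}e$ by $r(e)\in E^{0}$, and relations (1) and (2) splice in the vertex constraints (or produce zero). Iterating these moves, every nonzero element can be rewritten with all starred generators pushed to the right of the unstarred ones, giving a presentation $pq^{*}$ with $p,q$ (possibly empty) paths satisfying $r(p)=r(q)$. Relation (5), read as $ee^{*}=s(e)$ whenever $s(e)$ has out-degree $1$, then allows one to cancel a shared last edge from $p$ and $q$ at out-degree $1$ sources. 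Iterating this cancellation until it terminates yields either form (a) --- the last edges of $p$ and $q$ differ, or one side has collapsed to a vertex --- or form (b) --- they share a common last edge $e$ whose source $s(e)$ has out-degree $\geq 2$, so relation (5) can no longer be applied.

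For uniqueness, the plan is to exhibit a faithful representation of $LI(E)$ in which the candidate normal forms act distinctly. A natural choice is the representation by partial bijections on the set of paths in $E$ (extended, if necessary, by infinite paths so that relation (5) is verified cleanly): an edge $e$ acts by left-concatenation on paths beginning at $r(e)$, and $e^{*}$ acts as the partial inverse, deleting a leading $e$ when present. All five defining relations are then checked directly; the crucial case is relation (5), where at a vertex $v$ of out-degree $1$ every admissible path starting at $v$ must begin with the unique edge $e$ out of $v$, so $ee^{*}$ coincides with $v$. Two distinct reduced expressions $pq^{*}$ and $p'q'^{*}$ are then separated by evaluation on a suitable extension of $q$ or of $q'$. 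Equivalently, one can embed the nonzero elements of $LI(E)$ into the Leavitt path algebra $L_{K}(E)$ and invoke the known $K$-linear independence of the analogous monomials.

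The main obstacle is the uniqueness step: existence is a terminating rewrite using the presentation, but uniqueness requires certifying that no hidden identification is forced by relations (1)--(5). The delicate point is precisely the out-degree condition in relation (5) --- it must hold in the representation at out-degree $1$ vertices and must fail at out-degree $\geq 2$ vertices, and this asymmetry is exactly what justifies and forces the bifurcation between cases (a) and (b).
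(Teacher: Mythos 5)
The paper does not actually prove this lemma: it is quoted verbatim from \cite[Theorem 4.2]{r1}, so there is no internal proof to compare against, and your proposal must be judged on its own terms. The existence half is fine: relations (1)--(4) rewrite any nonzero word into the form $pq^*$ with $r(p)=r(q)$, and relation (5), read as $ee^*=s(e)$ at out-degree-$1$ vertices, cancels a common last edge until either the last edges differ (form (a)) or the common last edge has source of out-degree at least $2$ (form (b)); termination is clear since each cancellation shortens the word.

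The uniqueness half is where the real work lies, and you have correctly located the subtlety without fully resolving it. The action on \emph{finite} paths does not satisfy relation (5): at a vertex $v$ of out-degree $1$ with edge $e$, the empty path at $v$ is in the domain of $v$ but not of $ee^*$. The standard fix is to act on maximal paths (infinite paths together with finite paths ending at sinks); since $E$ is finite, every vertex emits a maximal path, so relations (1)--(5) all hold and every candidate normal form acts as a nonzero partial bijection. The separation step then needs to be made explicit: if $pq^*$ and $p_1q_1^*$ act identically, equality of domains forces (up to symmetry) $q_1=q\beta$ and then $p_1=p\beta$, where $\beta$ is a path all of whose edge-sources have out-degree $1$; a nonempty $\beta$ is incompatible with form (a) (the last edges of $p_1$ and $q_1$ would coincide) and with form (b) (the last edge of $\beta$ would have source of out-degree $1$), so $\beta$ is empty and the two expressions coincide. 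Your phrase ``separated by evaluation on a suitable extension'' gestures at exactly this, but the argument should be written out, as it is the whole content of uniqueness. Finally, the ``equivalently'' shortcut via $L_K(E)$ is flawed as stated: the normal-form monomials are \emph{not} $K$-linearly independent in $L_K(E)$ --- at a vertex of out-degree $2$ with edges $e,f$ one has $v=ee^*+ff^*$, a dependence among three normal forms --- and form-(b) monomials need not lie in the basis of \cite{r16}. What uniqueness requires is pairwise distinctness of their images, i.e.\ injectivity of $LI(E)\setminus\{0\}\to L_K(E)$, which is essentially the content of \cite[Theorem 4.10]{r1} and cannot be invoked for free without circularity.
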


  We give an example of the Leavitt inverse semigroup as follows.
	\begin{example} {\label{example1}} Let $F$ be the following graph.
		\begin{equation*}
			{\def\labelstyle{\displaystyle}
				\xymatrix{	
					& & v_{3}\\
					F : v_{1}\ar[r]^{e_{1}} &v_{2}\ar[ur]^{e_{2}} \ar[dr]^{e_{3}}&\\
					& & v_{4}\\						
			} }		
		\end{equation*}All the non-zero elements of $LI(F)$ are 	
		$$v_{1},
		v_{2}, e_{1}^{*},
		v_{3}, e_{2}^{*}, e_{2}^{*}e_{1}^{*},
        v_{4}, e_{3}^{*}, e_{3}^{*}e_{1}^{*},e_{1}, $$
		$$
		e_{2}, e_{2}e_{2}^{*}, e_{2}e_{2}^{*}e_{1}^{*},
        e_{3}, e_{3}e_{3}^{*}, e_{3}e_{3}^{*}e_{1}^{*},
        e_{1}e_{2}, e_{1}e_{2}e_{2}^{*}, e_{1}e_{2}e_{2}^{*}e_{1}^{*},
        e_{1}e_{3}, e_{1}e_{3}e_{3}^{*}, 
    e_{1}e_{3}e_{3}^{*}e_{1}^{*}.$$
\end{example}

We refer to \cite[Definition 1.1.1]{r4} 
 for the definition of a Leavitt path algebra of a finite graph over a field.

\begin{definition}
  Let $E$ be a finite graph and $K$ a field. The Leavitt path algebra of $E$ over $K$, denoted by $L_K(E)$, is the free associative algebra generated by $E^{0} \cup E^{1}\cup (E^{1})^{*}$ with coefficients in $K$, subject to the relations (1)--(4) used to define the Leavitt inverse semigroup $LI(E)$ and the additional relation:
\begin{itemize}
\item[(6)] $w = \sum\limits_{\left\{e \in E^1 \,|\, s(e) = w \right\}} e e^{*} \text{~for every~} w \in E^0 \text{~which is not a sink}$;
\end{itemize}
\end{definition}

The relations (3), (4) and (6) are called Cuntz-Krieger relations. The elements in $(E^{1})^{*}$ are called \textit{ghost edges}. If $p = e_1 e_2 \cdot \cdot \cdot e_n$ is a path in $\text{Path}(E) $, we define $p^{*}$ = $e_n^{*} e_{n-1}^{*} \cdot \cdot \cdot e_1^{*}$. In particular, we define $v^{*} = v$ for all $v \in E^0$. Let $x$ be an arbitrary element in $L_K(E)$.  One observes that $x$ can be written as $\sum_{i}\lambda_i p_i q_i^{*}$, where $\lambda_i \in K$ and $p_i, q_i \in \text{Path}(E) $ and $r(p_i) = r(q_i)$.

We refer the reader to \cite{r2, NvObook} for the theory of graded rings. Let $G$ be a group with identity denoted by $0$. A ring $R$ with unit
is called a \emph{$G$-graded ring} if $ R=\bigoplus_{ \gamma \in G} R_{\gamma}$
such that each $R_{\gamma}$ is an additive subgroup of $R$ and $R_{\gamma}  R_{\delta}
\subseteq R_{\gamma + \delta}$ for all $\gamma, \delta \in G$. The group $R_\gamma$ is
called the $\gamma$-\emph{homogeneous component} of $R.$ When it is clear from context that a ring $R$ is graded by the group $\Gamma,$ we simply say that $R$ is a  \emph{graded
ring}. We denote the set of all homogeneous elements of the graded ring $R$, by $R^h$.

The Leavitt path algebra has a natural $\mathbb{Z}$-grading.
Set $\deg v=0, v\in E^0$, $\deg e=1$, and $\deg e^*=-1, e\in E^1$. Then $L_K(E)=\bigoplus_{i\in\mathbb Z} L_K(E)_i $ where
   $$ L_K(E)_i = \operatorname{span}(\left\{ p q^{*} \,|\, p, q \in \text{Path}(E), |p| - |q| = i \text{~and~}r(p) = r(q)\right\}). $$

	Recall that a basis of $L_K(E)$  as $K$-vector space was given by \cite[Theorem 1]{r16}. We write down the bases of $L_K(E)$ as $K$-vector spaces under the condition that $E$ is a connected finite graph whose vertices have out-degree at most $1$. In this case for each non-sink vertex there exists only one edge starting from it. So each edge in $E$ is special, equivalently the map $\gamma: E^0\xrightarrow[]{} E^1$ can be uniquely defined. Then the basis elements of $L_K(E)$ are given as follows: (i) $v,v\in E^0$; (ii) $p,p^*$ , where $p$ is a path in $E$ with $|p|\geq 1$; (iii) $pq^*,$ where $p=e_1e_2\cdots e_n$ and $q=f_1f_2\cdots f_m$ are paths in $E$ ending at the same vertex $r(e_n)=r(f_m)$ with the condition that the last edges $e_n$ and $f_m$ are distinct.
 
 Let $E$ be a connected finite graph whose vertices have out-degree at most $1$.  If $E$ is a tree, by Proposition \ref{prop1} there exists a unique sink $v_{0}$. Let $\left\lbrace p_{1}, p_{2},\dots,p_{n}\right\rbrace $ be the set of all path ending at $v_{0}$. It follows from \cite[Lemma 3.4]{r7} that all $p_{i}p_{j}^{*}$, $1\leq i,j\leq n$,are the basis elements for $L_K(E)$. If $E$ is not a tree, then $E$ has a only non-trivial cycle $C$. We choose $v$ (an arbitrary vertex) in $C$ and remove the edge $e$ with $s(e) = v$ from the cycle $C$. In this new graph, let $\left\lbrace p_{1}, p_{2},\dots,p_{m} \right\rbrace$ be the set of all paths ending at $v$.  It follows from \cite[Theorem 1.6.21]{r2} that all $p_{i}C^{k}p_{j}^{*}$, $1\leq i,j\leq n$, $k\in \mathbb{Z}$, are the basis elements for $L_K(E)$. 
	
	By Lemma \ref{lem forms} we obtain the following consequence immediately; also compare  \cite[Theorem 4.7]{r1}.

	\begin{lemma}
		\label{lemma 11}
		Let $E$ be a connected finite graph whose vertices have out-degree at most 1, then every non-zero element of $LI(E)$ is uniquely expressible in one of the forms

  (i) $v,v\in E^0$; 
  
  (ii) $p,p^*,$ where $p$ is a path in $E$ with $|p|\geq 1$; 
  
  (iii) $pq^*$, where $p=e_1e_2\cdots e_n$ and $q=f_1f_2\cdots f_m$ are paths in $E$ ending at the same vertex $r(e_n)=r(f_m)$ with the condition that the last edges $e_n$ and $f_m$ are distinct.

  Therefore the basis of the Leavitt path algebra $L_{K}(E)$ consists of all the non-zero elements of the Leavitt inverse semigroup $LI(E).$
	\end{lemma}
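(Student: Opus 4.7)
The plan is to deduce this directly from Lemma \ref{lem forms}, specializing to the case where every vertex has out-degree at most $1$, and then compare the resulting list of normal forms with the $K$-basis of $L_K(E)$ already exhibited just before the statement.

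First I would start with an arbitrary non-zero $x \in LI(E)$ and apply Lemma \ref{lem forms}. This gives two possibilities: either $x = pq^{*}$ with $r(e_n)=r(f_m)$ and $e_n \neq f_m$ (form (a)); or $x = p'ee^{*}q'^{*}$ with $s(e) = r(p')=r(q')$ of out-degree at least $2$ (form (b)). The key observation is that the hypothesis on $E$ forces form (b) to be vacuous: since every vertex of $E$ has out-degree $\leq 1$, no vertex can serve as $s(e)$ in form (b). Hence every non-zero element of $LI(E)$ admits a unique presentation as $pq^{*}$ in the sense of form (a).

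Next I would split form (a) according to whether $p$ and $q$ are empty. If both are empty, $pq^{*}$ reduces to a single vertex $v \in E^{0}$, giving form (i). If exactly one is empty (say $p$), then $pq^{*} = q^{*}$ with $|q| \geq 1$, and symmetrically if $q$ is empty we obtain $p$ with $|p| \geq 1$; these are the elements in form (ii). If both $p$ and $q$ are non-empty, the constraint $r(e_n)=r(f_m)$ together with $e_n \neq f_m$ is exactly form (iii). Uniqueness in each case is inherited from the uniqueness asserted in Lemma \ref{lem forms}. The only mild care required here is to notice that when $p$ (or $q$) is empty the condition $e_n \neq f_m$ is automatic/vacuous, so the three cases are genuinely disjoint.

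For the second assertion, I would simply compare the list (i)--(iii) of normal forms just established with the explicit $K$-basis of $L_K(E)$ recorded in the paragraph preceding the lemma: under the out-degree $\leq 1$ assumption, that basis is described by exactly the same three families $\{v\}$, $\{p, p^{*}\}$, and $\{pq^{*} : e_n \neq f_m\}$. Therefore the basis of $L_K(E)$ is exactly the set of non-zero elements of $LI(E)$, finishing the proof. I expect the only subtle step to be handling the empty-path edge cases in form (a); the rest is a direct specialization of Lemma \ref{lem forms} and a bookkeeping comparison with the known basis of $L_K(E)$.
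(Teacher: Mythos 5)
Your proposal is correct and follows essentially the same route as the paper, which obtains the lemma immediately from Lemma~\ref{lem forms} (form (b) being impossible since no vertex of $E$ has out-degree at least $2$, and form (a) splitting into the three cases according to whether $p$ or $q$ is empty) and then identifies the resulting normal forms with the $K$-basis of $L_K(E)$ recorded in the paragraph preceding the statement. Your explicit handling of the empty-path edge cases just spells out what the paper treats as immediate.
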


	\begin{definition} \cite[Definition 2.1]{r10}
		Let $S$ be a semigroup and $G$ a group. If there is a map $\varphi: S  \backslash \left\lbrace 0 \right\rbrace \to G $ such that $\varphi(st) = \varphi(s)\varphi(t)$, whenever $ st \ne 0 $, then we call $S$ a $G$-graded semigroup and $\varphi$ a grading map of $S$.
		For each $\alpha \in G$, we set  $$S_{\alpha}:= \varphi^{-1}(\alpha)\cup \left\lbrace 0\right\rbrace .$$
		
		Equivalently, $S$ is a $G$-graded semigroup if there exist subsets $S_{\alpha} (\alpha \in G)$ of $S$ such that  $$S=\bigcup _{\alpha \in G}S_{\alpha} $$		
		\noindent where $S_{\alpha}S_{\beta} \subseteq S_{\alpha \beta}$ for all $\alpha,\beta \in G$, and $S_{\alpha}\cap S_{\beta} = \left\lbrace 0\right\rbrace $ for all distinct $\alpha , \beta \in G $.
	\end{definition}
	Let $S$ be a $G$-graded semigroup. For each $g\in G$, we refer to $S_{g}$ as the component of $S$ of degree $g$. If $s\in S_{g}\backslash \left\lbrace 0\right\rbrace $, we say that the degree of $s$ is $g$, and write $\deg s = g$.
	We recall that a homomorphism $\varphi: S \to T$ of $G$-graded semigroups is a graded homorphism if $\varphi(S_{\alpha})\subseteq T_{\alpha}$ for every $\alpha \in G$. Thus, a graded homomorphism is a homomorphism that preserves the degrees of the elements. Futhermore we call $\varphi$ a graded isomorphism if $\varphi$ is a bijection.

		For each  graph $E$, the Leavitt inverse semigroup $LI(E)$ is a $\mathbb  Z$-graded semigroup. By Lemma \ref{lem forms},  every element of $LI(E)$ is uniquely expressible in one of the forms

        (a) $pq^*$ where $p=e_{1}\dots e_{n}$ and $q=f_{1}\dots f_{m}$ are (possibly empty) paths with $r(e_n)=r(f_m)$ and $e_n \neq f_m$; or

        (b) $pq^*= p'ee^{*}q'^{*}$ where $p'$ and $q'$ are (possibly empty) paths with $r(p') = r(q')$ and the vertex $s(e)=r(p') = r(q')$ has out-degree at least 2.
We define a map
		\begin{align*}
			\phi: LI(E)\backslash \left\lbrace 0\right\rbrace  &\longrightarrow \mathbb  Z\\
			pq^{*} &\longmapsto |p|-|q|.
		\end{align*}
	where $pq^*$ is in one of the above two forms. Next we prove that $\phi$ is a grading map. We suppose that $p_{1}q_{1}^{*}, p_{2}q_{2}^{*}$ are non-zero elements of $LI(E)$ written in the above two forms. Then the product of $p_{1}q_{1}^{*}$ and $p_{2}q_{2}^{*}$ in $LI(E)$ is computed as follows:
		\begin{equation}
  \label{fourcases}
  (p_{1}q_{1}^{*})(p_{2}q_{2}^{*}) = \begin{cases}
			p_{1}p_3 q_{2}^{*}, & \text{if} \quad p_{2}=q_{1}p_3 \text{ for some } p_3 \in \text{Path}(E) \text{~and~} |p_3|\geq 1,\\
			p_{1}q_3^{*}q_{2}^{*}, & \text{if}  \quad q_{1}= p_{2}q_3 \text{ for some } q_3 \in \text{Path}(E)\text{~and~} |q_3|\geq 1,\\
   p_1'q_2'^*,& \text{if} \quad p_{2}=q_{1},p_1=p_1'e_t\cdots e_1, q_2=q_2'e_t\cdots e_1,\\
			0 ,& \text{otherwise}.
		\end{cases} 
  \end{equation} Here in the third subcase of \eqref{fourcases}, $e_1,\cdots, e_t$ with $t\geq 0$ are the edges with out-degree 1 and $p'_1q_2'^*$ in one of above two forms are transformed from $p_1q_2^*$ using the relation (5) of $LI(E)$. When $t=0$, we have $p'_1q_2'^*=p_1q_2^*.$
		In the first subcase of \eqref{fourcases}, we have $$\phi((p_{1}q_{1}^{*})(p_{2}q_{2}^{*})) = \phi(p_{1}p_3 q_{2}^{*}) = |p_{1}|+|p_3|-|q_{2}| = |p_{1}| - |q_{1}|+ |p_{2}|-|q_{2}| = \phi(p_{1}q_{1}^{*})\phi(p_{2}q_{2}^{*}).$$ In the second subcase of \eqref{fourcases}, we have $$\phi((p_{1}q_{1}^{*})(p_{2}q_{2}^{*})) = \phi(p_{1}q_3^{*} q_{2}^{*}) = |p_{1}|-|q_3|-|q_{2}| = |p_{1}|-|q_{1}|+|p_{2}|-|q_{2}|=\phi(p_{1}q_{1}^{*})\phi(p_{2}q_{2}^{*}).$$ In the third subcase of \eqref{fourcases}, we have  
  
  $$\phi((p_{1}q_{1}^{*})(p_{2}q_{2}^{*}))=\phi(p_1'q_2'^*)=(|p_1|-t)-(|q_2|-t)=\phi(p_{1}q_{1}^{*})\phi(p_{2}q_{2}^{*}).$$ Hence $\phi$ is a $\mathbb Z$-grading map of $LI(E)$.

	\section{Graded isomorphisms of Leavitt path algebras and Leavitt inverse semigroups}
	\label{section3}
	
	In this section, we give a combinatorial sufficient and necessary condition to classify Leavitt path algebras and  Leavitt inverse semigroups of graphs whose vertices have out-degree at most $1$ up to graded isomorphisms.

		\subsection{Graded isomorphisms of Leavitt path algebras} In this subsection, we consider the graded algebraic structure of Leavitt path algebras. We write down Lemmas \ref{lemmmma 4.8} and \ref{lemmmma 4.9} which were given by Hazrat \cite{r9}. 
		
	We first recall a grading on matrix rings. Given an abelian group $G$ and a $G$-graded ring $R$. Let $\{ 
 \gamma_1, \dots, \gamma_n \}$ be a subset of $G$ and $x$ be a homogeneous element of $R$.
 Define a grading on the $n\times n$-matrix ring $M_n(R)$ by assigning $$\text{deg}(e_{ij}(x))=\text{deg}(x)+\gamma_i-\gamma_j$$ and extend it linearly such that we obtain a graded matrix ring $M_n(R)(\gamma_1, \gamma_2,\cdots, \gamma_n)$. Here $e_{ij}(x)$ is the matrix with $x$ in the $ij$-position and zero elsewhere. One can see  that for $\lambda \in G$ $$ M_n(R)(\gamma_1, \gamma_2,\cdots, \gamma_n)_{\lambda}=\begin{pmatrix}
     R_{\lambda+\gamma_{1}-\gamma_{1}}&R_{\lambda+\gamma_{2}-\gamma_{1}}&\dots&R_{\lambda+\gamma_{n}-\gamma_{1}}\\
     R_{\lambda+\gamma_{1}-\gamma_{2}}&R_{\lambda+\gamma_{2}-\gamma_{2}}&\dots&R_{\lambda+\gamma_{n}-\gamma_{2}}\\
    \vdots&\vdots&\ddots&\vdots\\
     R_{\lambda+\gamma_{1}-\gamma_{n}}&R_{\lambda+\gamma_{2}-\gamma_{n}}&\dots&R_{\lambda+\gamma_{n}-\gamma_{n}}
 \end{pmatrix} $$ and  $M_n(R)(\gamma_1, \gamma_2,\cdots, \gamma_n)= \bigoplus_{\lambda\in G}M_n(R)(\gamma_1, \gamma_2,\cdots, \gamma_n)_{\lambda}.$
 


Hazrat described the algebraic structure of Leavitt path algebras of finite acyclic graphs in \cite{r9}. Here we apply the description \cite[Theorem 4.14]{r9} to the case of connected finite acyclic graphs whose vertices have out-degree at most 1.

		\begin{lemma}
			\label{lemmmma 4.8}
			Let $E$ be a connected finite acyclic graphs whose vertices have out-degree at most 1 and $K$ a field. For the unique sink $v$ in $E^0$, let $\left\lbrace p_{i}|1\leq i \leq n\right\rbrace$ be the set of all paths which end in $v$. Then there is a $\mathbb  Z$-graded isomorphism 
						\begin{equation}
							\label{iso1}			
									L_{K}(E) \cong_{gr} M_{n}(K)(|p_{1}|,\dots,|p_{n}^v|)	
					\end{equation} sending $p_ip_j^{*}$ to $e_{ij}$ for each $1\leq i, j\leq n, 1\leq s \leq t$. Here $e_{ij}$ is the matrix with 1 in the $ij$-position and zero elsewhere.
					 
			Furthermore, let $F$ be an another connected finite acyclic graph whose vertices have out-degree at most 1. For the unique sink $u$ in $F^0$ let $\left\lbrace p_{i}'|1\leq i \leq n'\right\rbrace$ be the set of all paths which end in $u$. Then $L_{K}(E) \cong _{gr} L_{K}(F)$ if and only if $n= n'$ and after a permutation of indices,  $\left\lbrace |p_{i}||1\leq i \leq n\right\rbrace $ and $\left\lbrace |p_{i}'||1\leq i \leq n' \right\rbrace $ present the same list.
		\end{lemma}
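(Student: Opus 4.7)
The plan is to split the lemma into two parts: (i) the existence of the graded isomorphism (\ref{iso1}), and (ii) the classification of Leavitt path algebras up to graded isomorphism via the multiset of path lengths. The first is essentially a specialization of known work, while the second requires a short but honest argument.

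For (i), I would apply \cite[Theorem 4.14]{r9}, which realizes the Leavitt path algebra of any finite acyclic graph as a direct sum of shifted matrix rings, one block per sink, indexed by paths ending at that sink and shifted by the corresponding path lengths. Under our hypotheses, Proposition \ref{prop1}(a) provides a unique sink, so only one block survives and the formula collapses to (\ref{iso1}); the assignment $p_i p_j^* \mapsto e_{ij}$ is exactly the map produced in Hazrat's construction. For self-containment I would also verify directly that this assignment is a graded $K$-algebra homomorphism: the Cuntz--Krieger relations reduce to $p_i^* p_k = \delta_{ik}\, r(p_i)$ by the tree structure given in Proposition \ref{prop1}(a), and the grading check is immediate from $\deg(p_i p_j^*) = |p_i| - |p_j| = \deg(e_{ij})$ in the shifted matrix ring. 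Bijectivity is then automatic because the basis $\{p_i p_j^*\}$ for $L_K(E)$ recorded just above the lemma (via \cite[Lemma 3.4]{r7}) is sent to the standard matrix-unit basis.

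For (ii), the backward direction is easy: a length-preserving bijection $\sigma$ of index sets induces the graded isomorphism $e_{ij} \mapsto e_{\sigma(i)\sigma(j)}$ between the two shifted matrix rings, which transports through (\ref{iso1}) to a graded isomorphism $L_K(E)\cong_{gr} L_K(F)$. The forward direction is the substantive step. Using (\ref{iso1}) on both sides reduces the question to deciding when two shifted matrix rings over the trivially $\mathbb Z$-graded field $K$ are graded-isomorphic. Inspecting the degree-zero component (a direct sum of copies of $K$) forces $n=n'$, and the standard classification of such rings (e.g.\ \cite[Chapter 1]{r2}) tells us that two shift tuples yield graded-isomorphic rings if and only if they agree as multisets up to a common translation $\delta\in\mathbb Z$. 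The final observation is that the unique sink contributes an empty path of length $0$ to both tuples, so $0$ appears in each multiset; this pins $\delta=0$ and gives the required equality of path-length multisets. I expect the most delicate step to be citing the matrix-ring classification in a precisely applicable form and cleanly extracting no-shift rigidity from the presence of a zero entry in each list.
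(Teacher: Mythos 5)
Your proposal is correct and is essentially the route the paper itself takes: the paper states this lemma without proof as a direct specialization of Hazrat's \cite[Theorem 4.14]{r9} to the unique-sink case given by Proposition \ref{prop1}(a), and your classification step is exactly what that statement implicitly rests on, namely Lemma \ref{keylemma} (over the trivially graded field $K$ only the operations \eqref{1} and \eqref{2} are available, since $K_\delta$ has no invertible elements for $\delta\neq 0$) combined with the fact that both shift lists contain $0$. Two small touch-ups: the degree-zero component of $M_n(K)(\gamma_1,\dots,\gamma_n)$ is a direct sum of matrix algebras $M_{m_d}(K)$, one summand for each distinct shift value $d$ with multiplicity $m_d$, not a direct sum of copies of $K$, so you should deduce $n=n'$ from the total $K$-dimension $n^2$ or from the final assertion of Lemma \ref{keylemma}; and the presence of $0$ in each list pins the common translation $\delta$ to $0$ only because path lengths are nonnegative, so that $0$ is the minimum of each list (mere membership of $0$ would not suffice).
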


			Let $K$ be a field and $K[x^{s},x^{-s}] = \bigoplus_{i\in s{\mathbb  Z}}Kx^{i}$ be the graded ring of Laurent polynomials $\mathbb  Z$-graded by $K[x^{s},x^{-s}]_{sk} = Kx^{sk} $ and $K[x^{s},x^{-s}]_{n} = 0 $ if $s$ does not divide $n$. The following consequence will be used in the proof of Lemma \ref{graded isomorphism}.

		\begin{lemma}
			\label{lem4.7}
			Let $K[x^{s},x^{-s}] = \bigoplus_{i\in s{\mathbb  Z}}Kx^{i}$ be the graded ring of Laurent polynomials. Then each invertible element in $K[x^{s},x^{-s}]$ is homogeneous, that is invertible elements in $K[x^{s},x^{-s}]$ only belong to $\cup_{j\in \mathbb  Z} Kx^{sj}$.
		\end{lemma}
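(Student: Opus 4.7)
The plan is to run a leading-and-trailing-term degree argument on a purported invertible element. The essential input is that $K[x^{s},x^{-s}]$ is an integral domain, which holds because $K$ is a field (and because, for $s\ge 1$, the ring $K[x^{s},x^{-s}]$ is just the Laurent polynomial ring $K[y,y^{-1}]$ in the indeterminate $y=x^{s}$). If $s=0$ the graded ring is simply $K$ and the claim is vacuous, so I will assume $s\ge 1$.

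Let $f\in K[x^{s},x^{-s}]$ be invertible with inverse $g$, and write
\[
f = \sum_{i=m}^{M} a_{i}x^{si}, \qquad g = \sum_{j=n}^{N} b_{j}x^{sj},
\]
with $a_{m},a_{M},b_{n},b_{N}$ all nonzero and $m\le M$, $n\le N$. The key step is to read off the extremal terms of the product: the coefficient of $x^{s(M+N)}$ in $fg$ is $a_{M}b_{N}$, while the coefficient of $x^{s(m+n)}$ is $a_{m}b_{n}$. Since $K$ is a field, both products are nonzero, so both exponents must coincide with the sole exponent occurring in $1$, namely $0$. Consequently $s(M+N)=0=s(m+n)$, i.e.\ $M+N=0$ and $m+n=0$.

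Subtracting gives $M-m = n-N$; the left side is nonnegative (as $m\le M$) while the right side is nonpositive (as $n\le N$), so both vanish and $m=M$. Hence $f = a_{M}x^{sM}\in Kx^{sM}$, which is exactly the conclusion. I do not anticipate any real obstacle: the only subtlety is that one must invoke the integral-domain property of $K$ to guarantee that the extremal coefficients $a_{M}b_{N}$ and $a_{m}b_{n}$ are nonzero, and handle the degenerate case $s=0$ separately.
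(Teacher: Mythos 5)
Your proof is correct and follows essentially the same route as the paper: both compare the highest and lowest degrees of an invertible element and its inverse against the degree of $1$, using that the extremal coefficients multiply to something nonzero because $K$ is a field. Your version is just phrased directly (concluding $m=M$) rather than as the paper's contradiction between the highest and lowest degree of the inverse, and your separate treatment of $s=0$ is a harmless extra (there the claim holds trivially rather than vacuously, since every element of $K=Kx^{0}$ is homogeneous).
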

		\begin{proof}
			Suppose that there exists an invertible element $k_{1}x^{si_{1}}+\dots+k_{j}x^{si_{j}} (i_{1}\textgreater \dots \textgreater i_{j})$  in $K[x^{s},x^{-s}]$ which is not homogeneous. Then there exists an element $f(x)\in K[x^{s},x^{-s}]$ such that $$f(x)(k_{1}x^{si_{1}}+\dots+k_{j}x^{si_{j}})=1.$$ By comparing the degrees of the two sides, the sum of the highest degrees of $f(x)$ and $k_{1}x^{si_{1}}+\dots+k_{j}x^{si_{j}}$ is zero, as 1 has degree zero. Then $f(x)$ has the highest degree $-si_{1}$. Similarly $f(x)$  has the lowest degree $-si_{j}$. But we have $-si_{j}\textgreater -si_{1}$. This is a contradiction. So each invertible element in $K[x^{s},x^{-s}]$ is homogeneous.
		\end{proof}

The following consequence explores the isomorphisms between two graded matrix rings.
		
		\begin{lemma} [{\cite{r9}}, {\cite[Lemma 2.1]{r14}}]
			\label{keylemma}
			Let $R$ be a $\Gamma$-graded ring and $\gamma_{1},\dots, \gamma_{n}\in \Gamma$ with $\Gamma$ an abelian group.
			
			(1) If $\pi$ is a permutation of set $\left\lbrace 1,\dots,n\right\rbrace $, then 
			\begin{equation}\label{1}
				M_{n}(R)(\gamma_{1}, \gamma_{2}, \dots, \gamma_{n})\cong_{gr} M_{n}(R)(\gamma_{\pi(1)}, \gamma_{\pi(2)}, \dots, \gamma_{\pi(n)})
			\end{equation}	
			by the map $x\mapsto pxp^{-1}$ where $p$ is the permutation matrix with 1 in the $(i,\pi(i))$-position for $i=1,\dots,n$ and zero elsewhere.
			
			(2) For any $\delta\in\Gamma$, we have
			\begin{equation}\label{2}
				M_{n}(R)(\gamma_{1}, \gamma_{2}, \dots, \gamma_{n})\cong_{gr} M_{n}(R)(\gamma_{1}+\delta, \gamma_{2}+\delta, \dots, \gamma_{n}+\delta)
			\end{equation} via the identity map as the isomorphism.	
			
			(3)  If $\delta$ is such that there is an invertible element $u_{\delta}$ in $R_{\delta}$, then 
			
			\begin{equation}\label{fff}
				M_{n}(R)(\gamma_{1}, \gamma_{2}, \dots, \gamma_{n})\cong_{gr} M_{n}(R)(\gamma_{1}+\delta, \gamma_{2}, \dots, \gamma_{n})
			\end{equation}	
			by the map $x\mapsto u^{-1}xu$ where $u$ is the diagonal matrix with $u_{\delta},1,1,\dots,1$ on the diagonal.

		If $\Gamma$ is abelian and $R$ and $S$ are $\Gamma$-graded division rings, then 
		$$M_{n}(R)(\gamma_{1},\gamma_{2},\dots,\gamma_{n})\cong_{gr}M_{m}(S)(\delta_{1},\delta_{2},\dots,\delta_{m}) $$
		implies that $R\cong_{gr}S$, that $m=n$, and the list $\delta_{1},\delta_{2},\dots,\delta_{m}$ is obtained from the list $\gamma_{1},\gamma_{2},\dots,\gamma_{n}$ by a composition of finitely many operations as in part (1) to (3).
	\end{lemma}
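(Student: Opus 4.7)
My plan is to dispatch parts (1)--(3) by writing down explicit candidate isomorphisms and verifying degree-preservation by brute computation on the matrix units $e_{ij}(x)$, and then to handle the uniqueness statement by extracting invariants of $M_n(R)(\gamma_1,\dots,\gamma_n)$ that depend only on the list $\gamma_1,\dots,\gamma_n$ modulo the operations (1)--(3).

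For (1), I set $p=\sum_{k=1}^{n} e_{k,\pi(k)}$, so that $p^{-1}=\sum_{k=1}^{n} e_{\pi(k),k}$, and a direct multiplication yields $p\,e_{ij}(x)\,p^{-1}=e_{\pi^{-1}(i),\pi^{-1}(j)}(x)$. Substituting into the formula for the target grading $M_n(R)(\gamma_{\pi(1)},\dots,\gamma_{\pi(n)})$ recovers $\deg(x)+\gamma_i-\gamma_j$, matching the source. Part (2) is immediate: the grading depends only on the differences $\gamma_i-\gamma_j$, so a uniform shift by $\delta$ is absorbed by the identity map. For (3), I conjugate by $u=\mathrm{diag}(u_\delta,1,\dots,1)$, whose inverse is $\mathrm{diag}(u_\delta^{-1},1,\dots,1)$; working through the four cases $(i=1$ or $i\ne 1)\times(j=1$ or $j\ne 1)$, the scalar $u_\delta^{\pm 1}$ introduced when $i=1$ or $j=1$ accounts precisely for the shift $\gamma_1\mapsto\gamma_1+\delta$ on the target side, while the two off-diagonal corner cases are untouched and the $(1,1)$-case contributes $-\delta$ and $+\delta$ that cancel.

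For the final uniqueness assertion, I would appeal to the graded Wedderburn--Artin picture used by Hazrat. The point is that a graded matrix ring $M_n(R)(\gamma_1,\dots,\gamma_n)$ over a graded division ring $R$ is graded-Morita equivalent to $R$, and its graded simple right modules are obtained from the canonical simple graded $R$-module by the shifts determined by the $\gamma_i$. Hence any graded isomorphism $M_n(R)(\gamma_1,\dots,\gamma_n)\cong_{gr} M_m(S)(\delta_1,\dots,\delta_m)$ forces $R\cong_{gr} S$ and $m=n$, and induces a bijection between the graded simple modules that matches the multisets $\{\gamma_i\}$ and $\{\delta_j\}$ modulo the subgroup $\Gamma_R^{\times}\subseteq\Gamma$ generated by degrees of invertible homogeneous elements of $R$. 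Translating this back, the two lists must be related by a permutation (operation (1)), a global shift (operation (2)), and finitely many individual $\Gamma_R^{\times}$-shifts (operation (3)), which is the desired conclusion.

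The main obstacle is precisely this uniqueness assertion: the verifications in (1)--(3) are routine once one guesses the right conjugating matrix, but the converse requires the structure theorem for graded matrix rings over graded division rings, which one can either cite from \cite{r9} and \cite{r14} or derive via graded simple modules. In the applications of interest $R$ will be either a field $K$ (for which $\Gamma_R^{\times}=\{0\}$, so operation (3) becomes trivial and the lists must agree up to a permutation and a global shift) or a graded Laurent polynomial ring, for which Lemma \ref{lem4.7} pins down $\Gamma_R^{\times}=s\mathbb{Z}$; in either case the equivalence relation on the lists becomes completely explicit and manageable.
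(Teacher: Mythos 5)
Your proposal is correct and takes essentially the same route as the paper: the explicit conjugation maps you verify for (1)--(3) are exactly the ones stated in the lemma, and the paper itself offers no independent argument for the final uniqueness assertion, simply citing \cite{r9} and \cite[Lemma 2.1]{r14}, which is precisely the graded division-ring structure theory (graded Wedderburn--Artin, with shifts measured modulo the subgroup of degrees of invertible homogeneous elements) that you invoke. Your observation that this subgroup is trivial for $K$ and equals $s\mathbb{Z}$ for $K[x^{s},x^{-s}]$ (via Lemma \ref{lem4.7}) is also how the paper uses the result.
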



		We recall from \cite[Theorem 4.20]{r9} the algebraic structure of Leavitt path algebras of multi-headed comets. We apply the result to the case of connected finite graphs with a unique non-trivial cycle whose vertices have out-degree at most 1.
		
		\begin{lemma}
			\label{lemmmma 4.9}
			Let $K$ be a field and $E$ a connected finite graph with a non-trivial cycle $C$ of length $l$, whose vertices have out-degree at most 1. Choose $v$ (an arbitrary vertex) in $C$ and remove the edge $e$ with $s(e) = v$ from the cycle $C$. In this new graph, let $\left\lbrace p_{i}|1\leq i \leq n\right\rbrace$ be the set of all paths which end in $v$. Then there is a $\mathbb  Z$-graded isomorphism 
			\begin{equation}
				\label{iso2}
				L_{K}(E) \cong _{gr}  M_{n}(K[x^{l},x^{-l}])(|p_{1}|,\dots,|p_{n}|)
			\end{equation}	
			sending $p_iC_{l}^{k}p_j^{*} $ to $e_{ij}(x^{kl})$ for each $1\leq i,j \leq n,  k\in \mathbb  Z$. Here $e_{ij}(x^{kl})$ is the matrix with $x^{kl}$ in the $ij$-position and zero elsewhere.
			
			\indent  Furthermore, let  $F$ be an another connected finite graph with a non-trivial cycle $C'$ of length $l'$, whose vertices have out-degree at most 1. Choose $u$ (an arbitrary vertex) in $C'$ and remove the edge $\alpha'$ with $s(\alpha') = u$ from the cycle $C'$. In this new graph, let $\left\lbrace p_{i}'|1\leq i \leq n'\right\rbrace$ be the set of all paths which end in $u$. Then $L_{K}(E) \cong _{gr} L_{K}(F)$ if and only if  $l= l^{'}$, $n=n'$, and after a permutation of indices, and $\left\lbrace |p_{i}||1\leq i \leq n\right\rbrace$ can be obtained from $\left\lbrace |p_{i}'||1\leq i \leq n'\right\rbrace$ by a composition of finitely many operations as in parts \eqref{1} to \eqref{fff}. 
		\end{lemma}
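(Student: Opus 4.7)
The plan is to derive both parts as specializations of Hazrat's structural theorem for comet-like graphs (\cite[Theorem 4.20]{r9}), verifying the explicit form that follows from our standing hypothesis that every vertex has out-degree at most $1$.

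For the isomorphism \eqref{iso2}, I would first observe that removing the edge $e$ with $s(e)=v$ produces a connected finite acyclic graph with unique sink $v$, so the $K$-basis for $L_K(E)$ stated just before Lemma \ref{lemmmma 4.8} is exactly $\{p_i C^k p_j^* : 1 \le i,j \le n,\, k \in \mathbb{Z}\}$. I would then define $\Phi : L_K(E) \to M_n(K[x^l,x^{-l}])(|p_1|,\ldots,|p_n|)$ by $\Phi(p_i C^k p_j^*) = e_{ij}(x^{kl})$ and extend $K$-linearly. Since the two sides have the same basis indexing set, $\Phi$ is a $K$-linear bijection by construction. Multiplicativity is verified on the basis: using the Cuntz--Krieger relations iteratively, $(p_i C^k p_j^*)(p_s C^t p_u^*)$ vanishes unless $j = s$ and in that case collapses to $p_i C^{k+t} p_u^*$, matching $e_{ij}(x^{kl})\, e_{su}(x^{tl}) = \delta_{js} e_{iu}(x^{(k+t)l})$. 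The grading is automatic, since $\deg(p_i C^k p_j^*) = |p_i| + kl - |p_j|$ coincides with the prescribed degree of $e_{ij}(x^{kl})$ in $M_n(K[x^l,x^{-l}])(|p_1|,\ldots,|p_n|)$.

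For the classification, the reverse implication follows directly from Lemma \ref{keylemma}(1)--(3): each of the three operations preserves the graded isomorphism class of $M_n(K[x^l,x^{-l}])$, and for \eqref{fff} one uses that each nonzero $x^{lj}$ is an invertible homogeneous element of degree $lj$. For the forward implication, assuming $L_K(E) \cong_{gr} L_K(F)$ and transporting through \eqref{iso2} yields
\[
M_n(K[x^l,x^{-l}])(|p_1|,\ldots,|p_n|) \cong_{gr} M_{n'}(K[x^{l'},x^{-l'}])(|p'_1|,\ldots,|p'_{n'}|).
\]
Because $K[x^l,x^{-l}]$ is a $\mathbb{Z}$-graded division ring (each nonzero homogeneous element is invertible, by direct inspection or by Lemma \ref{lem4.7}), the final clause of Lemma \ref{keylemma} forces $n = n'$, the graded isomorphism $K[x^l,x^{-l}] \cong_{gr} K[x^{l'},x^{-l'}]$, and the equivalence of the two lists under operations \eqref{1}--\eqref{fff}. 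Comparing the supports of the two graded rings then gives $l\mathbb{Z} = l'\mathbb{Z}$, hence $l = l'$.

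The main obstacle I anticipate is the careful verification of multiplicativity of $\Phi$ on basis elements: the product $p_j^* p_s$ with $j \ne s$ must be shown to vanish via iterated applications of the Cuntz--Krieger relations, and when $j = s$ the cancellation must deliver precisely $C^{k+t}$ rather than some residual combination of cycle factors and paths through $v$. Once this computation is organized by induction on $\min(|p_j|,|p_s|)$ combined with the comet structure around $v$, the remaining arguments reduce to citations of the results already recorded in Section \ref{section1} and in Lemma \ref{keylemma}.
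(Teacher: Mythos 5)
Your argument is correct, but it is organized differently from the paper: the paper gives no independent proof of this lemma at all, treating both the isomorphism \eqref{iso2} and the classification clause as a direct specialization of Hazrat's structure theorem for multi-headed comets (\cite[Theorem 4.20]{r9}) to graphs with out-degree at most $1$, just as Lemma \ref{lemmmma 4.8} specializes \cite[Theorem 4.14]{r9}. You instead reprove the first part by hand: you take the basis $\{p_iC^kp_j^*\}$ recorded in Section \ref{section1} (via \cite[Theorem 1.6.21]{r2} -- note it is stated there, not immediately before Lemma \ref{lemmmma 4.8}), send it bijectively to the basis $\{e_{ij}(x^{kl})\}$, and check multiplicativity and degrees. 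The multiplicativity check you flag as the main obstacle does go through for the reason you indicate: since $v$ is the unique sink of the pruned graph, no $p_j$ can be a proper initial segment of a $p_s$, so $p_j^*p_s=\delta_{js}v$, and $C^kC^t=C^{k+t}$ follows because every vertex on $C$ has out-degree $1$, giving $CC^*=v$ as well as $C^*C=v$. For the classification you invoke exactly the ingredients the paper relies on implicitly: Lemma \ref{keylemma}(1)--(3) for sufficiency, and its final clause (with $K[x^l,x^{-l}]$ a graded division ring, cf.\ Lemma \ref{lem4.7}) plus comparison of supports to get $n=n'$ and $l=l'$ for necessity. The trade-off is the expected one: the paper's citation route is shorter, while your direct construction is self-contained and makes explicit the map $p_iC^kp_j^*\mapsto e_{ij}(x^{kl})$, which is precisely what is used later in the proof of Lemma \ref{graded isomorphism}.
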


\subsection{Main result} In this subsection, we give a combinatorial sufficient and necessary condition to classify Leavitt path algebras and  Leavitt inverse semigroups of graphs whose vertices have out-degree at most $1$ up to graded isomorphisms.

Let $E$ be a connected finite graph whose vertices have out-degree at most 1. If $E$ has a sink, then we consider the sink as a trivial cycle with length 0. If $E$ contains a cycle $C$ with length $s$, then we choose a vertex $v_0$ on $C$ and remove the edge (if any) starting from $v_0$. Thus we get a new graph $E'$. Given nonnegative integer $m$, we define $m (\rm mod ~0) \equiv m$. For any $v \in E^0$, if $p$ is the unique path from $v$ to $v_0$ in $E'$, we call $|p| ({\rm mod}~ s)$ the relative depth of $v$ with respect to $v_0$.

The following theorem is the main result.

\begin{theorem}
\label{theorem}
Let $E$ and $F$ be connected finite graphs whose vertices have out-degree at most 1 and $C_1$ and $C_2$ (possibly trivial) be cycles respectively in $E$ and $F$. The following three statements are equivalent.

(1) $L_{K}(E)\cong_{gr} L_{K}(F)$ as $\mathbb  Z$-graded Leavitt path algebras;

(2) $LI(E) \cong_{gr} LI(F)$ as $\mathbb{Z}$-graded Leavitt inverse semigroups;

(3) $|C_1| = |C_2|$ and there exist $v_0$ on $C_1$ and $w_0$ on $C_2$ such that for any $d \in \{0, 1, \ldots, |C_1| - 1\}$, the number of vertices in $E$ having relative depth $d$ with respect to $v_0$ is equal to the number of vertices in $F$ having relative depth $d$ with respect to $w_0$.
\end{theorem}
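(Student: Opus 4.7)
I would establish the equivalences through the chain $(3)\Rightarrow(2)\Rightarrow(1)\Rightarrow(3)$, handling separately the tree case ($|C_1|=|C_2|=0$) and the non-trivial cycle case ($l:=|C_1|=|C_2|>0$); note that (3) forces $|C_1|=|C_2|$, so $E$ and $F$ are automatically of the same type. In both cases Lemma \ref{lemma 11} yields a convenient description of $LI(E)\setminus\{0\}$: in the tree case it is in bijection with $E^0\times E^0$ via $(v,w)\mapsto p_vp_w^*$ (after simplification using relation (5) of Definition \ref{def1}), while in the cycle case it is in bijection with $E^0\times E^0\times\mathbb Z$ via $(v,w,k)\mapsto p_v C^k p_w^*$, where $p_v$ is the unique path from $v$ to the chosen basepoint $v_0$ in the cut graph $E'$. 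A direct computation from relations (3)--(5) of Definition \ref{def1} reduces multiplication to a Kronecker-$\delta$ pattern $(p_vp_w^*)(p_xp_y^*)=\delta_{w,x}\,p_vp_y^*$ in the tree case, with the analogue $(p_vC^kp_w^*)(p_xC^{k'}p_y^*)=\delta_{w,x}\,p_vC^{k+k'}p_y^*$ in the cycle case.

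For $(3)\Rightarrow(2)$, the hypothesis provides a bijection $\phi\colon E^0\to F^0$ preserving relative depth. In the tree case this immediately gives the graded semigroup isomorphism $\Phi(p_vp_w^*)=p_{\phi(v)}p_{\phi(w)}^*$. In the cycle case, the matching can be arranged so that $|p_{\phi(v)}|-|p_v|\in l\mathbb Z$ for every $v$; writing $|p_{\phi(v)}|-|p_v|=a_vl$ with $a_v\in\mathbb Z$, I set $\Phi(p_vC^kp_w^*)=p_{\phi(v)}C'^{\,k+a_w-a_v}p_{\phi(w)}^*$, the exponent being chosen so as to preserve the degree $|p_v|+kl-|p_w|$. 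Checking the Kronecker-$\delta$ multiplication rule then shows $\Phi$ is a graded semigroup isomorphism. For $(2)\Rightarrow(1)$, Lemma \ref{lemma 11} asserts that $LI(E)\setminus\{0\}$ is a $K$-basis of $L_K(E)$, so any graded semigroup isomorphism $\varphi\colon LI(E)\to LI(F)$ extends $K$-linearly to a $K$-linear bijection $\tilde\varphi\colon L_K(E)\to L_K(F)$; multiplicativity and grading preservation on the basis lift to $\tilde\varphi$.

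For $(1)\Rightarrow(3)$, I would use Lemmas \ref{lemmmma 4.8} and \ref{lemmmma 4.9} to translate a graded algebra isomorphism into a graded matrix-ring isomorphism $M_n(R)(|p_1|,\dots,|p_n|)\cong_{gr}M_{n'}(R')(|p_1'|,\dots,|p_{n'}'|)$, where $R$ is either $K$ or $K[x^l,x^{-l}]$ according to whether $E$ is acyclic. The final part of Lemma \ref{keylemma} forces $R\cong_{gr}R'$, which rules out mixing trees with cyclic graphs and pins down $l=l'=|C_1|=|C_2|$. It further forces $n=n'$ and the two lists of path-lengths to agree up to a permutation, a common shift, and individual shifts by degrees of invertible homogeneous elements of $R$. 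In the tree case $R=K$ is concentrated in degree zero, so only permutations and a single overall shift are available; since $0$ appears in each list (at the sink), the shift vanishes and the depth multisets coincide. In the cycle case, Lemma \ref{lem4.7} shows that invertible homogeneous elements of $K[x^l,x^{-l}]$ have degrees in $l\mathbb Z$, so the individual shifts lie in $l\mathbb Z$; hence the two lists agree modulo $l$ as multisets, up to a single overall shift. Such an overall shift is realized by relabeling the cycle basepoint $v_0$, which translates all relative depths uniformly, and (3) follows.

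The main obstacle I anticipate is the cycle-case analysis in $(1)\Rightarrow(3)$: combining Lemmas \ref{keylemma} and \ref{lem4.7} to extract the mod-$l$ invariant and interpreting the residual overall shift as a change of basepoint on $C_1$ requires careful bookkeeping of the exponents. A secondary subtlety is verifying in $(3)\Rightarrow(2)$ that the assignment $(v,w,k)\mapsto(\phi(v),\phi(w),k+a_w-a_v)$ is well-defined and multiplicative, since the offsets $a_v\in\mathbb Z$ depend on a choice of integer lift of the residue $|p_v|\bmod l$.
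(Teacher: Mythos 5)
Your plan is correct, and it reaches the theorem by a genuinely different route than the paper. The paper proves (2)$\Rightarrow$(1) as you do (Remark \ref{remark}(2), via Lemma \ref{lemma 11}), but it handles the other implications differently: it proves (1)$\Rightarrow$(2) by the long case analysis of Lemma \ref{graded isomorphism}, showing that the composite of the explicit graded matrix isomorphisms of Lemmas \ref{lemmmma 4.8}, \ref{lemmmma 4.9} and \ref{keylemma} carries monomial basis elements $p_iC_1^kp_j^*$ to monomials $q_sC_2^{k'}q_t^*$; it proves (3)$\Rightarrow$(1) through the graded matrix realizations and Lemma \ref{keylemma}(3); and it proves (2)$\Rightarrow$(3) using the semigroup-theoretic structure results quoted in Lemma \ref{equalequiv} (images of vertices, NE paths and cycles under an isomorphism of $LI$) combined with gradedness. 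You instead close the cycle (3)$\Rightarrow$(2)$\Rightarrow$(1)$\Rightarrow$(3): your (3)$\Rightarrow$(2) builds the graded semigroup isomorphism $\Phi(p_vC_1^kp_w^*)=p_{\phi(v)}C_2^{\,k+a_w-a_v}p_{\phi(w)}^*$ directly from a residue-preserving vertex bijection, using the matrix-unit (Kronecker-$\delta$) description of $LI(E)$ — this is legitimate, and the $\delta$-rule does hold because $p_w^*p_x=\delta_{w,x}v_0$ (distinct vertices give paths with distinct first edges, and relation (3) kills the product), while uniqueness of the parametrization rests on Lemma \ref{lemma 11}; your exponent choice is exactly what degree preservation forces, and multiplicativity checks out since $a_w=a_x$ when $w=x$. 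Your (1)$\Rightarrow$(3) then extracts the invariant from the uniqueness clause of Lemma \ref{keylemma} plus Lemma \ref{lem4.7} (individual shifts lie in $l\mathbb Z$, the common shift is absorbed by the fact that moving the basepoint along the cycle translates all relative depths uniformly mod $l$; in the tree case the presence of $0$ in both lists kills the shift). The trade-off: your route bypasses the paper's most technical verification (Lemma \ref{graded isomorphism}) and does not need the structural results of \cite{r1} invoked in Lemma \ref{equalequiv}, at the price of leaning on the normal-form/basis parametrization of $LI(E)$ for graphs with out-degree at most $1$; the paper's route, conversely, produces the explicit matrix-unit formulas that it reuses in its worked examples and keeps the (2)$\Rightarrow$(3) step purely semigroup-theoretic. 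When writing this up, the two points to spell out carefully are the verification that every nonzero element of $LI(E)$ is of the form $p_vC^kp_w^*$ (reduce an arbitrary canonical form by appending $p_{r(p)}p_{r(p)}^*$ and using relation (5)), and the bookkeeping that a change of basepoint $c_0\mapsto c_i$ on the cycle shifts every relative depth by $i$ modulo $l$.
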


\begin{remark}
\label{remark}
\begin{itemize}
    \item[(1)] When $C_1$ and $C_2$ in Theorem \ref{theorem} (3) are trivial cycles, then they are sinks in $E$ and $F$ respectively.  The statement (3) turns to be that there exist a sink $v_0\in E^0$ and a sink $w_0\in F^0$  such that for any non-negative integer $d$, the number of vertices in $E$ having relative depth $d$ with respect to  $v_0$ is equal to the number of vertices in $F$ having relative depth $d$ with respect to $w_0$. Here the relative depth is exactly the length of the path starting from the vertex and ending at the sink.

    \item[(2)] For (2)$\Rightarrow$ (1) of Theorem \ref{theorem},	we suppose that $LI(E)\cong_{gr}LI(F)$.  The non-zero elements of $LI(E)$ are precisely the non-zero elements in a natural basis for $L_{K}(E)$. So a graded isomorphism between $LI(E)$ and $LI(F)$ is a bijection between the natural bases of $L_{K}(E)$ and $L_{K}(F)$ that also preserves multiplication of basis elements in the algebras and the grading. Hence it induces a graded isomorphism between $L_{K}(E)$ and $L_{K}(F)$. 
    \item[(3)]  For (1)$\Rightarrow$ (2) of Theorem \ref{theorem}, the idea for the proof is as follow: We observe that when $L_{K}(E) \cong_{\rm gr} L_{K}(F)$ we have the following graded isomorphism  \begin{align*}
            \phi:	L_{K}(E)&\stackrel{f}{\longrightarrow}  M_{m}(K[x^{l_{1}},x^{-l_{1}}])(|p_{1}|,|p_{2}|,\dots,|p_{m}|)\\
            &\stackrel{g}{\longrightarrow} M_{n}(K[x^{l_{2}},x^{-l_{2}}])(|q_{1}|,|q_{2}|,\dots,|q_{n}|)\\
            &\stackrel{h}{\longrightarrow} L_{K}(F),
        \end{align*}
 where $f$ and $h$ are given by \eqref{iso2} and $g$ is one of the three kinds of explicit isomorphisms given by Lemma \ref{keylemma}. We will prove that $\phi$ sends basis elements of $L_K(E)$ to basis elements of $L_K(F)$. Based on this, we obtain the induced graded isomorphisms between $LI(E)$ and $LI(F)$. The precise proof for (1)$\Rightarrow$ (2) is given as the proof of Lemma \ref{graded isomorphism} below.
\end{itemize}
    \end{remark}

\begin{lemma}\label{graded isomorphism}
    	Let $E$ and $F$ be connected finite graphs whose vertices have out-degree at most 1 and $K$ a field. Then the graded isomorphism  $L_{K}(E)\cong_{gr} L_{K}(F)$ as $\mathbb  Z$-graded algebras implies the graded isomorphism  $LI(E) \cong_{gr} LI(F)$ as $\mathbb  Z$-graded Leavitt inverse semigroups  .
\end{lemma}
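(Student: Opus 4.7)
The plan is to exploit the explicit graded matrix ring presentations of $L_K(E)$ and $L_K(F)$ provided by Lemmas \ref{lemmmma 4.8} and \ref{lemmmma 4.9}, and to construct, rather than directly transport, a graded algebra isomorphism between them that respects the natural bases. By Lemma \ref{lemma 11}, the non-zero elements of $LI(E)$ are precisely the elements of the natural $K$-basis of $L_K(E)$, so any basis-preserving algebra isomorphism will restrict (after sending $0 \mapsto 0$) to a bijection between $LI(E)$ and $LI(F)$ that automatically preserves both the multiplication and the degree.

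First I would split into two cases. If $E$ is acyclic then $L_K(E)$ is finite-dimensional over $K$, so $L_K(F)$ is too, forcing $F$ to be acyclic as well; Lemma \ref{lemmmma 4.8} then yields $n = n'$ together with coincidence of the multisets of path lengths $\{|p_i|\}$ and $\{|p_i'|\}$. Otherwise $E$ contains a unique non-trivial cycle $C_1$, hence so does $F$, say $C_2$, and the last assertion of Lemma \ref{keylemma} together with Lemma \ref{lemmmma 4.9} forces $|C_1| = |C_2| =: l$, $n = n'$, and the lists $(|p_i|)$ and $(|p_i'|)$ to be related by a finite composition of the three basic operations of Lemma \ref{keylemma}.

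Next I would build the desired graded algebra isomorphism as $\psi = h \circ g \circ f$, where $f$ and $h$ are the canonical matrix-ring identifications (Lemma \ref{lemmmma 4.8} or Lemma \ref{lemmmma 4.9}) applied to $E$ and $F$ respectively, and $g$ is an explicit composition of basic isomorphisms from Lemma \ref{keylemma} realizing the matching of the degree lists. The key verification is that $g$ can be chosen to send every matrix unit $e_{ij}(x^{kl})$ (or simply $e_{ij}$ in the acyclic case) to another matrix unit of the same kind, with coefficient $1$. Operation (1) (permutation conjugation) visibly does this; operation (2) is literally the identity map on the ring; and for operation (3) one invokes Lemma \ref{lem4.7}, which shows that every invertible homogeneous element of $K[x^l, x^{-l}]$ has the form $\lambda x^{lk}$. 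Choosing the monomial representative $u_\delta := x^\delta$ (for $\delta = ls$), conjugation by $\mathrm{diag}(u_\delta, 1, \ldots, 1)$ sends $e_{1j}(x^{kl}) \mapsto e_{1j}(x^{(k-s)l})$ and $e_{i1}(x^{kl}) \mapsto e_{i1}(x^{(k+s)l})$, and fixes all the remaining matrix units. In the acyclic case only operation (1) is required, so the preservation of matrix units is immediate.

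Consequently $\psi$ carries the natural basis of $L_K(E)$ bijectively onto that of $L_K(F)$. Since, by Lemma \ref{lem forms} and the product table \eqref{fourcases}, the product of any two non-zero elements of $LI$ is again either a non-zero basis element or $0$, the bijection $\psi|_{LI(E)}$ is a semigroup homomorphism, and it preserves degrees because $\psi$ is graded; hence it is a graded isomorphism $LI(E) \cong_{gr} LI(F)$. I expect the main obstacle to lie in operation (3): a non-monomial invertible element $\lambda x^\delta$ with $\lambda \neq 1$ would inject spurious scalar coefficients and break the basis correspondence, so the monomial choice is essential; a secondary subtlety is to verify that the finite composition of operations witnessing the list match in Lemma \ref{keylemma}'s final clause can indeed be realized entirely by such monomial-type conjugations together with permutations and regradings.
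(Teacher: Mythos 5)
Your proposal is correct and follows essentially the same route as the paper: reduce to the tree/non-tree dichotomy via dimension, use the graded matrix-ring presentations of Lemmas \ref{lemmmma 4.8} and \ref{lemmmma 4.9}, realize the matching of degree lists by the three basic isomorphisms of Lemma \ref{keylemma} (taking the monomial unit $x^{\delta}$ in operation (3), justified by Lemma \ref{lem4.7}), check that each such map sends matrix units to matrix units, and conclude that the composite graded algebra isomorphism restricts to a graded bijection of the natural bases, hence a graded isomorphism $LI(E)\cong_{gr} LI(F)$. This is precisely the paper's argument, including the case analysis on the three operations.
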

\begin{proof}	
 	We assume that $E$ is a tree but $F$ is not a tree. It follows from \cite[Lemma 3.4]{r7} that the dimension of  $L_{K}(E)$  as $K$ vector space is finite. But it follows \cite[Theorem 1.6.21]{r2} that the dimension of $L_{K}(F)$ is infinite. Hence we have $ L_{K}(E)\ncong L_{K}(F)$. Then there are only two cases, that is either $E$ and $F$ are both trees, or $E$ and $F$ are neither.

			The first case is that $E$ and $F$ are both trees. Then $E$ and $F$ respectively has a sink, denoted by $v$ and $v^{'}$. Let $R(v)=\left\lbrace p_{1}, p_{2},\dots,p_{n}\right\rbrace $  be the set of all paths ending at $v$ and $R(v^{'})=\left\lbrace q_{1}, q_{2},\dots,q_{n^{'}}\right\rbrace $ the set of all paths ending at $v^{' }$. If $L_{K}(E)\cong _{gr} L_{K}(F)$, by Lemma \ref{lemmmma 4.8}, we have $n=n^{'}$ and  $\left| p_{i}\right| = \left| q_{i}\right| $, $i=1,2,\dots,n$.
			We have 	$$	\phi:	L_{K}(E)\stackrel{f}{\longrightarrow}  M_{n}(K)(|p_{1}|,|p_{2}|,\dots,|p_{m}|)\stackrel{g}{\longrightarrow}  L_{K}(F)$$
			which is the composition of these three graded isomorphisms $f$ and $g$ sending elements as follows:
				$$p_{i}p_{j}^{*}\longmapsto e_{ij}\longmapsto  q_{i}q_{j}^{*}.$$
				Here $e_{ij}$ is the matrix with 1 at $ij$-position and zero elsewhere, $f$ and $g$ are given by \eqref{iso1}.
				 
			Now we have an induced homomorphism 
			\begin{align*}
				\widetilde{\phi} : LI(E) &\longrightarrow LI(F)\\
				p_{i}p_{j}^{*} &\longmapsto \phi(	p_{i}p_{j}^{*}). 
			\end{align*} between Leavitt inverse semigroups. Note that  $\widetilde{\phi}$ is a graded semigroup isomorphism.

			The second case is that $E$ and $F$ are not trees. It follows from  \cite[Proposition 2.1]{r1} that $E$ and $F$ respectively has a non-trivial cycle, denoted by $C_{1}$ of length $l_{1}$ and $C_{2}$ of length $l_{2}$. We choose $v \in C_{1}$, $v'\in C_{2}$  and remove the edge $e$ with $s(e)=v$ and edge $e^{'}$ with $s(e^{'})=v^{'}$. In the two new graphs, let $\left\lbrace p_{1},\dots , p_{m}\right\rbrace $ be the set of all paths which end in $v$ and $\left\lbrace q_{1},\dots , q_{n} \right\rbrace $ be the set of all paths which end in $v^{'}$.
			
			If $L_{K}(E)\cong_{gr}L_{K}(F)$, then by Lemma \ref{lemmmma 4.9}, we have $ l_{1}=l_{2}$ and $m=n$ and $\left\lbrace |p_{i}^{v_{s}}||1\leq i \leq n(v_{s})\right\rbrace$ can be obtained from $\left\lbrace |p_{i}^{u_{s}}||1\leq i \leq n(u_{s})\right\rbrace$ by a composition of finitely many operations as in parts \eqref{1} to \eqref{fff}. Set $\gamma_i=|p_i|$ for $1\leq i\leq n$.

			(1) For the first case, we have $|q_i|=\gamma_{\pi(i)}$ for $1\leq i\leq n$ and $\pi$ some permutation of the set $\{1,2,\cdots, n\}$.  
			
		\noindent Take any $p_{i}C_{1}^{k}p_{j}^{*}\in L_{K}(E)$ with $1\leq i,j\leq n$. We can assume that $\pi(s)=i \text{ and } \pi(t)=j$, where $1\leq s,t \leq n$. We have	
     \begin{align*}
            \phi:	L_{K}(E)&\stackrel{f}{\longrightarrow}  M_{m}(K[x^{l_{1}},x^{-l_{1}}])(|p_{1}|,|p_{2}|,\dots,|p_{m}|)\\
            &\stackrel{g}{\longrightarrow} M_{n}(K[x^{l_{2}},x^{-l_{2}}])(|q_{1}|,|q_{2}|,\dots,|q_{n}|)\\
            &\stackrel{h}{\longrightarrow} L_{K}(F)
        \end{align*}
			  which is the composition of these three graded isomorphisms $f$, $g$ and $h$, sending elements as follows:
			$$p_{i}C_{1}^{k}p_{j}^{*}\longmapsto e_{ij}(x^{kl_{1}})\longmapsto e_{st}(x^{kl_{1}})\longmapsto q_{s}C_{2}^{k}q_{t}^{*}.$$
		Here $e_{ij}(x^{kl_1})$ is the matrix with $x^{kl_1}$ at $ij$-position and zero elsewhere, $f$ and $h$ are given by \eqref{iso2}. And for $g$ we use \eqref{1} to obtain that $g(x) = pxp^{-1}$, where $p$ is the permutation matrix with 1 at the $(i,\pi(i))$-position for $i=1,\dots,n$ and zeroes elsewhere. Hence we have $$g(e_{ij}(x^{kl_1}))=pe_{ij}(x^{kl_1})p^{-1}=e_{st}(x^{kl_1}).$$  
	
	  Now we have an induced homomorphism 
			\begin{align*}
				\widetilde{\phi} : LI(E) &\longrightarrow LI(F)\\
				p_{i}C_{1}^{k}p_{j}^{*} &\longmapsto \phi(	p_{i}C_{1}^{k}p_{j}^{*}). 
			\end{align*} between Leavitt inverse semigroups. Note that  $\widetilde{\phi}$ is a graded semigroup isomorphism.

		(2)	For the second case, we have $|q_{i}| = \gamma_{i}+\alpha$ for $1\leq i 
		\leq n$, where $\alpha\in \mathbb  Z$. Take any $p_{i}C_{1}p_{j}^*\in L_{K}(E)$, where $1\leq i,j \leq n$. We have  \begin{align*}
            \phi:	L_{K}(E)&\stackrel{f}{\longrightarrow}  M_{m}(K[x^{l_{1}},x^{-l_{1}}])(|p_{1}|,|p_{2}|,\dots,|p_{m}|)\\
            &\stackrel{g}{\longrightarrow} M_{n}(K[x^{l_{2}},x^{-l_{2}}])(|q_{1}|,|q_{2}|,\dots,|q_{n}|)\\
            &\stackrel{h}{\longrightarrow} L_{K}(F),
        \end{align*}
		  which is the composition of these three graded isomorphisms $f$, $g$ and $h$, sending elements as follows:
		$$p_{i}C_{1}^{k}p_{j}^{*}\longmapsto e_{ij}(x^{kl_{1}})\longmapsto e_{ij}(x^{kl_{1}})\longmapsto q_{i}C_{2}^{k}q_{j}^{*}.$$
		Here $e_{ij}(x^{kl_1})$ is the matrix with $x^{kl_1}$ at $ij$-position and zero elsewhere, $f$ and $h$ are given by \eqref{iso2}. And for $g$ we use \eqref{2} to obtain the following formula : $$g(e_{ij}(x^{kl_1}))=e_{ij}(x^{kl_1}).$$  
		
		Now we have an induced homomorphism 
		\begin{align*}
			\widetilde{\phi} : LI(E) &\longrightarrow LI(F)\\
			p_{i}C_{1}^{k}p_{j}^{*} &\longmapsto \phi(	p_{i}C_{1}^{k}p_{j}^{*}). 
		\end{align*} between Leavitt inverse semigroups. Note that  $\widetilde{\phi}$ is a graded semigroup isomorphism.

		(3) For the third case, without loss of generality we can assume that $|q_{1}| = \gamma_{1}+\delta, |q_{i}| = \gamma_{i}$ for $2\leq i \leq n$, where $\delta = \lambda l_{1}, \lambda\in \mathbb  Z$; refer to Lemma \ref{lem4.7}.

Take any $p_{i}C_{1}p_{j}^{*}\in L_{K}(E)$, where $1\leq i,j \leq n$. We have 
        \begin{align*}
            \phi:	L_{K}(E)&\stackrel{f}{\longrightarrow}  M_{m}(K[x^{l_{1}},x^{-l_{1}}])(|p_{1}|,|p_{2}|,\dots,|p_{m}|)\\
            &\stackrel{g}{\longrightarrow} M_{n}(K[x^{l_{2}},x^{-l_{2}}])(|q_{1}|,|q_{2}|,\dots,|q_{n}|)\\
            &\stackrel{h}{\longrightarrow} L_{K}(F)
        \end{align*}
	  which is the composition of these three graded isomorphisms $f$, $g$ and $h$, sending elements as follows:
		
		\textbf{case 1}: $i = j =1$,
		$$p_{1}C_{1}^{k}p_{1}^{*}\longmapsto e_{11}(x^{kl_{1}})\longmapsto e_{11}(x^{kl_{2}}) \longmapsto q_{1}C_{2}^{k}q_{1}^{*},  $$
		
		\textbf{case 2}: $i = 1, j\neq 1$,
		$$p_{1}C_{1}^{k}p_{j}^{*}\longmapsto e_{1j}(x^{kl_{1}})\longmapsto e_{1j}(x^{kl_{2}-\delta}) \longmapsto q_{1}C_{2}^{k-\lambda}q_{j}^{*},$$
		
		\textbf{case 3}: $i \neq 1, j= 1$,
		$$p_{i}C_{1}^{k}p_{1}^{*}\longmapsto e_{i1}(x^{kl_{1}})\longmapsto e_{i1}(x^{kl_{2}+\delta}) \longmapsto q_{i}C_{2}^{k+\lambda}q_{1}^{*},$$
		
		\textbf{case 4}: $i \neq 1, j\neq 1$,
		$$p_{i}C_{1}^{k}p_{j}^{*}\longmapsto e_{ij}(x^{kl_{1}})\longmapsto e_{ij}(x^{kl_{2}}) \longmapsto q_{i}C_{2}^{k}q_{j}^{*}.$$
		Here $e_{ij}(x^{kl_1})$ is the matrix with $x^{kl_1}$ at $ij$-position and zero elsewhere, $f$ and $h$ are given by \eqref{iso2}. And for $g$ we use \eqref{fff} to obtain that $g(x)=u^{-1}x u$, where $u$ is the diagonal matrix with $x^{\delta}, 1, 1,\dots,1$ on the diagonal. Hence for $i = j=1$ we have 	$$g(e_{11}(x^{kl_{1}})) = u^{-1}e_{11}(x^{kl_{1}})u = e_{11}(x^{kl_{1}}).$$
		
  For $i = 1, j\neq 1$, we have
		$$g(e_{1j}(x^{kl_{1}})) = u^{-1}e_{1j}(x^{kl_{1}})u = e_{1j}(x^{kl_{1}-\delta}).$$
		
		For $i \neq 1, j = 1$, we have
		$$g(e_{i1}(x^{kl_{1}})) = u^{-1}e_{i1}(x^{kl_{1}})u = e_{i1}(x^{kl_{1}+\delta}),$$
		
		For $i \neq 1, j \neq 1$, we have
		$$g(e_{ij}(x^{kl_{1}})) = u^{-1}e_{ij}(x^{kl_{1}})u = e_{ij}(x^{kl_{1}}).$$

Now we have an induced homomorphism  
		\begin{align*}
			\widetilde{\phi} : LI(E) &\longrightarrow LI(F)\\
			p_{i}C_{1}^{k}p_{j}^{*} &\longmapsto \phi(	p_{i}C_{1}^{k}p_{j}^{*}). 
		\end{align*} between Leavitt inverse semigroups. Note that  $\widetilde{\phi}$ is a graded semigroup isomorphism. Therefore we have that the isomorphism $L_{K}(E)\cong_{gr} L_{K}(F)$ implies $LI(E)\cong_{gr} LI(F).$
  \end{proof}

As the preparation for the proof of Theorem \ref{theorem} we recall the following result on Leavitt inverse semigroups. We first recall that a path $p = e_{1}e_{2}\dots e_{n}$ in $E$ has  exits if at least one of the vertices $s(e_{i})$ has out-degree greater than 1. In particular, an edge $e\in E^1$ has exits if and only if $s(e)$ has out-degree greater than 1. We say that $p=e_1e_2\dots e_n$ is an NE path if every vertex $s(e_i)$, $i=1,\dots, n$ has out-degree 1. We also define the empty path at any vertex $v$ to be an NE path.
 
By \cite[Lemma 4.9 (a) (b)]{r1} and \cite[Lemma 6.6 (e)]{r1} we have the following consequence.

\begin{lemma}\label{equalequiv}
Let $E$ and $F$ be two directed connected finite graphs whose vertices having out-degree at most $1$. If $\phi$ is an isomorphism from $LI(E)$
onto $LI(F)$, then the following statements hold.

(a) The isomorphism $\phi$ induces a bijection from $E^0$ onto $F^0$.

(b) For any nonzero $p q^* \in LI(E)$, if $\phi(p q^*) = p_1
q_1^*$ and $p, q$ are NE paths, then $p_1, q_1$ are NE paths,
$\phi(s(p)) = s(p_1)$ and $\phi(s(q)) = s(q_1)$;

(c) If $C$ is a cycle in $E$, then $\phi(C)$ is uniquely
expressible in the form $\phi(C) = pC'p^*$ or $\phi(C) = pC'^*p^*$
in $LI(F)$ for some cycle $C'$ and some NE path $p$ in $F$, and moreover $\phi^{-1}(C') = p_1 C_1 p_1^*$ or $\phi^{-1}(C') = p_1 C_1^* p_1^*$ for some cyclic conjugate $C_1$ of $C$ and some NE path $p_1$ in $E$.
\end{lemma}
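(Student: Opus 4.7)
The plan is to establish (a), (b), and (c) in turn, using the canonical form of Lemma \ref{lemma 11}, the multiplication rule \eqref{fourcases}, and the fact that any inverse-semigroup isomorphism preserves inverses. For (a), I would characterize $E^0$ algebraically as the set of non-zero idempotents of $LI(E)$. Since every vertex of $E$ has out-degree $\leq 1$, iterated use of the relation $v=ee^*$ telescopes $pp^*$ to $s(p)$ for every non-empty path $p$. Combined with Lemma \ref{lemma 11}, an element $pq^*$ with distinct last edges is never idempotent, while a nontrivial $p$ or $p^*$ has square either of strictly larger length or zero. Hence the non-zero idempotents of $LI(E)$ are exactly the vertices, and likewise for $LI(F)$; restricting $\phi$ to non-zero idempotents therefore yields the desired bijection $E^0\to F^0$.

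For (b), I would use that in any inverse semigroup the inverse $x^{-1}$ is determined algebraically by $xx^{-1}x=x$ and $x^{-1}xx^{-1}=x^{-1}$, so $\phi(x^{-1})=\phi(x)^{-1}$. A direct computation gives $(pq^*)(qp^*) = p\,r(q)\,p^* = pp^* = s(p)$ and $(qp^*)(pq^*) = s(q)$; applying $\phi$ with the hypothesis $\phi(pq^*)=p_1q_1^*$ yields $\phi(s(p)) = p_1p_1^* = s(p_1)$ and $\phi(s(q)) = s(q_1)$. Because every vertex of $F$ has out-degree $\leq 1$, every path in $F$ is automatically NE, so $p_1$ and $q_1$ are NE.

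For (c), the starting observation is that $s(C)=r(C)=v$ forces $s(\phi(C))=r(\phi(C))=\phi(v)$ by the same source/range computation, and $\phi(C)^k=\phi(C^k)\neq 0$ for every $k\in\mathbb Z$ since the $C^k$ are pairwise distinct non-zero basis elements. Writing $\phi(C)=p_1q_1^*$ in canonical form and applying \eqref{fourcases} iteratively: the case $p_1=q_1$ would make $\phi(C)$ a fixed vertex, contradicting distinctness of $\phi(C^k)$; otherwise one obtains either $\phi(C)=p_1$ with $p_1$ a circuit, or $\phi(C)=q_1^*$ with $q_1$ a circuit, or $\phi(C)^k=q_1 r_3^k q_1^*$ forcing $r_3$ a circuit, or dually $\phi(C)^k=p_1(s_3^k)^*p_1^*$ forcing $s_3$ a circuit. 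By Proposition \ref{prop1}(b) applied to $F$, any circuit in $F$ is a power $C'^j$ of some cycle $C'$, and collecting cases gives $\phi(C) = pC'^j p^*$ or $p(C'^*)^j p^*$ for some NE path $p$ in $F$ and some integer $j\geq 1$.

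The main obstacle is ruling out $j\geq 2$. The key fact is that $C$ has no proper power decomposition in $LI(E)$: if $C=z^j$ for some $z\in LI(E)$ and $j\geq 2$, a case analysis on the normal form of $z$ (Lemma \ref{lemma 11}) forces $z$ to be a pure path, since in every other normal form $z^j$ fails to be a pure path of the correct shape; then $z$ must be a circuit of length $|C|/j$, contradicting Proposition \ref{prop1}(b), which says the only circuits of $E$ are powers of the unique cycle of length $|C|$. Using $p^*p=r(p)=s(C')$ one checks $(pC'p^*)^j=pC'^j p^*$ (and similarly in the ghost case), so $j\geq 2$ would exhibit $C=\phi^{-1}(pC'p^*)^j$ as a proper power, a contradiction; hence $j=1$. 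The "moreover" clause is obtained by running the same argument on $\phi^{-1}$ applied to the cycle $C'$ of $F$, combined with Proposition \ref{prop1}(b) applied to $E$, which guarantees that the resulting cycle $C_1$ in $E$ is a cyclic conjugate of $C$.
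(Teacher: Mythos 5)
The paper offers no proof of this lemma: it is quoted directly from \cite[Lemmas 4.9(a),(b) and 6.6(e)]{r1}, so your self-contained argument is necessarily a different route, and it is essentially correct in the out-degree-at-most-one setting. Identifying $E^0$ with the non-zero idempotents via the telescoping $pp^*=s(p)$ gives (a) (you should say explicitly that an isomorphism of semigroups with zero sends zero to zero, so ``non-zero idempotent'' is preserved); (b) is the standard inverse-preservation computation, the NE conditions being vacuous here since every path is NE; and in (c) the reduction of $\phi(C)$ to $pC'^{\,j}p^*$ or $p(C'^{*})^{j}p^*$ via the multiplication rule \eqref{fourcases}, followed by the ``$C$ is not a proper power'' argument forcing $j=1$, does work --- the case analysis on the normal form of $z$ with $z^j=C$ succeeds because, by Proposition \ref{prop1}(b) and the out-degree hypothesis, the circuits of $E$ based at $s(C)$ are exactly the powers $C^k$. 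What the citation buys the authors is generality (the lemmas of \cite{r1} hold for arbitrary graphs, where the NE and exit hypotheses genuinely matter); what your argument buys is a short direct proof in the only case actually needed. One caveat: you never address the word ``uniquely'' in (c), and the expression $pC'p^*$ is not literally unique without a normalization on $p$ (for instance $C'=C'\cdot C'\cdot C'^{*}$ in $LI(F)$), so that clause either needs its intended meaning spelled out or should be dropped; it is not used in the paper's application of the lemma.
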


\noindent{\bf Proof of Theorem \ref{theorem}:} The equivalence of (1) and (2) in Theorem \ref{theorem} follows immediately from Remark \ref{remark} (2) and Lemma \ref{graded isomorphism}. It remains to show (3)$\Rightarrow$ (1) and (2)$\Rightarrow$ (3).

(3)$\Rightarrow$ (1): First assume that $C_1$ and $C_2$ are nontrivial cycles. We remove the edge starting from $v_0$. Thus we get a new graph $E'$. Similarly we remove the edge starting from $w_0$ and get a new graph $F'$.  We suppose that $|C_1| = |C_2|=s$. Set  $$X_d=\{ p ~|~  p\in \text{Path }(E') \text{~with ~} r(p)=v_0, |p|({\rm mod} ~s) \equiv d \}.$$  We list $p_1^d, p_2^d,\cdots, p_{|X_d|}^d$ as all the elements in $X_d$. Then by Lemma \ref{lemmmma 4.9},
we have 
$$L_K(E)\cong _{gr} M_n(K[x^s,x^{-s}])(\underbrace{|p^0_1|,\dots,|p^0_{|X_0|}|}_{|X_0|},\underbrace{|p^1_1|,\dots,|p^1_{|X_1|}|}_{|X_1|},\dots,\underbrace{|p^{s-1}_1|,\dots, |p^{s-1}_{|X_{s-1}|}|}_{|X_{s-1}|})$$  with $n= \sum \limits_{d=0} ^{s-1}|X_d|$. 

Similarly, we set $$Y_d=\{ q ~|~  q\in \text{Path }(F') \text{~with ~} r(q)=w_0, |q|({\rm mod} ~s) \equiv d \}$$  and  list $q_1^d, q_2^d,\cdots, q_{|Y_d|}^d$ as all the elements in $Y_d$. Since $|X_d| = |Y_d|$ for each $d$, we have $n=\sum \limits_{d=0} ^{s-1}|Y_d|$. Then by Lemma \ref{lemmmma 4.9}, we have 
$$L_K(F)\cong _{gr} M_n(K[x^s,x^{-s}])(\underbrace{|q^0_1|,\dots,|q^0_{|Y_0|}|}_{|Y_0|},\underbrace{|q^1_1|,\dots,|q^1_{|Y_1|}|}_{|Y_1|},\dots,\underbrace{|q^{s-1}_1|,\dots, |q^{s-1}_{|Y_{s-1}|}|}_{|Y_{s-1}|}).$$
For each $d\in \{ 0,1,\dots,s-1\}$ and each $ 1\leq i \leq |X_d|$, we have $|p_i^d|=|q_i^d|+ks$ for some $k\in \mathbb{Z}$. By Lemma \ref{keylemma} (3), we have $L_K(E)\cong _{gr}L_K(F).$

When $C_1$ and $C_2$ are trivial cycles, then they are sinks. We suppose that $v_0$ and $w_0$ be sinks respectively in $E$ and $F$. Set $$X'_d = \{ p ~|~  p\in \text{Path }(E) \text{~with ~} r(p)=v_0, |p|= d \}.$$ We list $p_1^d, p_2^d,\cdots, p_{|X'_d|}^d$ as all the elements in $X'_d$. Then by Lemma \ref{lemmmma 4.8},
we have 
$$L_K(E)\cong _{gr} M_n(K)(\underbrace{|p^0_1|,\dots,|p^0_{|X'_0|}|}_{|X'_0|},\underbrace{|p^1_1|,\dots,|p^1_{|X'_1|}|}_{|X'_1|},\dots,\underbrace{|p^{s-1}_1|,\dots, |p^{s-1}_{|X'_{s-1}|}|}_{|X'_{s-1}|})$$  with $n= \sum \limits_{d=0} ^{s-1}|X'_d|$. Similarly, we denote the set $Y'_d = \{ q ~|~  q\in \text{Path }(F) \text{~with ~} r(q)=w_0, |q|= d \}.$  We list $q_1^d, q_2^d,\cdots, q_{|Y'_d|}^d$ as all the elements in $Y'_d$. So we have $|X'_d| = |Y'_d|$ for each $d$. Then by Lemma \ref{lemmmma 4.8}, we have 
$$L_K(F)\cong _{gr} M_n(K)(\underbrace{|q^0_1|,\dots,|q^0_{|Y'_0|}|}_{|Y'_0|},\underbrace{|q^1_1|,\dots,|q^1_{|Y'_1|}|}_{|Y'_1|},\dots,\underbrace{|q^{s-1}_1|,\dots, |q^{s-1}_{|Y'_{s-1}|}|}_{|Y'_{s-1}|}).$$
For each $d$ and each $ i$, we have $|p_i^d|=|q_i^d|$. Hence, we have $L_K(E)\cong _{gr}L_K(F).$

(2)$\Rightarrow$ (3): Let $\phi$ be a graded isomorphism from $LI(E)$ to $LI(F)$. It follows from Lemmas \ref{lemma 11} and \ref{equalequiv}(c) that $\phi(C_1) = q C_2 q^*$ for some NE path $q$ in $F$ since $\phi$ is graded. So we have $|C_1| = |C_2|$ (Note that this is also true if $E$ contains no cycles). To prove the remaining part, it suffices to show that $\phi$ preserves the relative depth of vertices for appropriately chosen vertices respectively on $C_1$ and $C_2$. 

Fix a vertex $u_0$ on $C_1$ and assume that $\phi(u_0) = v_1$ for some $v_1 \in F^0$ by using Lemma~\ref{equalequiv}(a).  If $C_1$ is trivial, then $u_0$ is the unique sink in $E$. In this case $C_2$ must be trivial. We choose the unique sink of $C_2$ as $v_0$ and suppose that $q$ is the unique path from $v_1$ to $v_0$. Let $\phi^{-1}(v_0) = u_1$ the $p$ be the unique path from $u_1$ to $u_0$. Then we see from Lemma~\ref{equalequiv}(b) that $$\phi(p) = \phi(u_1pu_0) = \phi(u_1) \phi(p) \phi(u_0) = \phi(u_1) \phi(p) \phi(u_0) = v_0 \phi(p) v_1 q^*.$$ Thus we observe that both $p$ and $q$ must be empty since $\phi$ is graded. This leads to $\phi(u_0) = v_0$. If $C_1$ is nontrivial. It is easy to choose a vertex $v_0$ on $C_2$ such that the unique path $q$ from $v_1$ to $v_0$  has length $m|C_1|$ for some nonnegative integer $m$. 

Now for any $u \in E^0$, suppose that $\phi(u) = v$ for some $v \in F_0$ and $p_1$ is the unique path from $u$ to $u_0$ which does not contain the cycle $C_2$. Then again we see from Lemma~\ref{equalequiv}(b) that $$\phi(p_1) =\phi(up_1u_0) =\phi(u)\phi(p_1)\phi(u_0)=vq_1 C_2^l q^*v_1=q_1 C_2^l q^*$$ for some $l \in \mathbb{Z}$ and NE path $q_1$ with $s(q_1) = v$ and $r(q_1) = v_0$. Therefore, $|p_1| = |q_1| + l |C_2| - |q|$ since $\phi$ is graded. If $C_1$ is trivial, then $|p_1| = |q_1|$. If $C_1$ is nontrivial, then by noticing that $|q|$ is a multiple of $|C_1| = |C_2|$, we observe that the relative depth of $u$ with respect to $u_0$ also equals to the relative depth of $v$ with respect to $v_0$. Equivalently, in both cases, we have $$|p_1| ({\rm mod} ~|C_2|)=|q_1|  ({\rm mod} ~|C_1|).$$ The proof of Theorem \ref{theorem} is completed.

We give an example of two graphs below which satisfy the combinatorial condition of Theorem  \ref{theorem}.

\begin{example} Let $F_{1}$ and $F_{2}$ be the following graphs.
			\begin{equation*}
				{\def\labelstyle{\displaystyle}
					\xymatrix{	
					F_1: &v_{1} \ar[r]^{e_{1}} & v_{2} \ar@/^/[r]^{e_{2}} &v_{3} \ar@/^/[l]^{e_{3}} & v_{4} \ar[l] ^{e_{4}} 
				} \qquad \qquad
			}		
			{\def\labelstyle{\displaystyle}
			\xymatrix{	
				F_2: &w_{2} \ar@/^/[r]^{f_{1}} & w_{3} \ar@/^/[l]^{f_{2}} & w_{4} \ar[l] ^{f_{3}} &w_{1} \ar[l]^{f_{4}} 
			} 
		}			
			\end{equation*}
			We choose $v_{3}$ in $F_{1}$ and remove the edge $e_{3}$ with $s(e_{3}) = v_{3}$. In the new graph $F_1'$, let $\left\lbrace v_{3}, e_{2}, e_{4}, e_{1}e_{2}\right\rbrace $ be the set of all paths sending at $v_{3}$ with $p_{1} =v_{3}, p_{2} = e_{2},p_3=e_4, p_{4}= e_{1}e_{2}$. Then $L_{K}(F_{1})\cong _{gr} M_{4}(K[x^{2},x^{-2}])(0,1,1,2).$ Similarly, we choose $w_{2}$ in $F_{2}$ and remove the edge $f_{1}$ with $s(f_{1}) = w_{2}$. In the new graph $F_2'$, let $\left\lbrace w_{2}, f_{2}, f_{3}f_{2},  f_{4}f_{3}f_{2} \right\rbrace $ be the set of all paths ending at $w_{2}$ with $q_{1} = w_{2}, q_{2} = f_{2}, q_3=f_3f_2, q_{4} = f_{4}f_{3}f_{2}$. Then $L_{K}(F_{2}) \cong _{gr} M_{4}(K[x^{2},x^{-2}])(0,1,2,3).$
			By Lemma \ref{keylemma}, we have 
		\begin{align*}
				M_{4}(K[x^{2},x^{-2}])(0,1,1,2) &\cong_{gr} M_{4}(K[x^{2},x^{-2}])(0+1,1+1,1+1,2+1)\\
				& \cong_{gr} M_{4}(K[x^{2},x^{-2}])(1,2-2,2,3)\\
				& \cong_{gr} M_{4}(K[x^{2},x^{-2}])(0,1,2,3).
			\end{align*}

   Set  $C_1=e_3e_2$ and $C_2=f_1f_2$ to be cycles in $E$ and $F$ respectively. 
   Take any $p_{i}C_{1}^{k}p_{j}^{*}\in L_{K}(F_{1})$, where $1\leq i,j\leq 4$ and $k\in \mathbb{Z}$. We have 			\begin{align*}
				\phi: L_{K}(F_{1})&\stackrel{f}{\longrightarrow} M_{4}(K[x^{2},x^{-2}])(0,1,1,2)\\
				& \stackrel{g}{\longrightarrow} M_{4}(K[x^{2},x^{-2}])(0+1,1+1,1+1,2+1)\\
				& \stackrel{h}{\longrightarrow} M_{4}(K[x^{2},x^{-2}])(1,2-2,2,3)\\
				&\stackrel{\eta}{\longrightarrow} M_{4}(K[x^{2},x^{-2}])(0,1,2,3)\\
				&\stackrel{\theta}{\longrightarrow} L_{K}(F_{2})
			\end{align*}
		which is the composition of five graded isomorphisms $f$, $g$, $h$, $\eta$ and $\theta$, sending elements as follows:

        \[ \begin{cases}
            p_{2}C_{1}^{k}p_{2}^{*}\longmapsto e_{22}(x^{2k})\longmapsto e_{22}(x^{2k})\longmapsto e_{22}(x^{2k})\longmapsto e_{11}(x^{2k})\longmapsto q_{1}C_{2}^{k}q_{1}^{*};\\

			p_{2}C_{1}^{k}p_{1}^{*}\longmapsto e_{21}(x^{2k})\longmapsto e_{21}(x^{2k})\longmapsto e_{21}(x^{2k+2})\longmapsto e_{12}(x^{2k+2})\longmapsto q_{1}C_{2}^{k+1}q_{2}^{*};\\
			p_{2}C_{1}^{k}p_{j}^{*}\longmapsto e_{2j}(x^{2k})\longmapsto e_{2j}(x^{2k})\longmapsto e_{2j}(x^{2k+2})\longmapsto e_{1j}(x^{2k})\longmapsto q_{1}C_{2}^{k+1}q_{j}^{*}, &   \quad j=3,4;\\

   p_{1}C_{1}^{k}p_{2}^{*}\longmapsto e_{12}(x^{2k})\longmapsto e_{12}(x^{2k})\longmapsto e_{12}(x^{2k-2})\longmapsto e_{21}(x^{2k-2})\longmapsto q_{2}C_{2}^{k-1}q_{1}^{*}; \\
				p_{i}C_{1}^{k}p_{2}^{*}\longmapsto e_{i2}(x^{2k})\longmapsto e_{i2}(x^{2k})\longmapsto e_{i2}(x^{2k-2})\longmapsto e_{i1}(x^{2k})\longmapsto q_{i}C_{2}^{k-1}q_{1}^{*}, &  \quad i=3,4;\\

    p_{1}C_{1}^{k}p_{1}^{*}\longmapsto e_{11}(x^{2k})\longmapsto e_{11}(x^{2k})\longmapsto e_{11}(x^{2k})\longmapsto e_{22}(x^{2k})\longmapsto q_{2}C_{2}^{k}q_{2}^{*}; \\
			p_{1}C_{1}^{k}p_{j}^{*}\longmapsto e_{1j}(x^{2k})\longmapsto e_{1j}(x^{2k})\longmapsto e_{1j}(x^{2k})\longmapsto e_{2j}(x^{2k})\longmapsto q_{2}C_{2}^{k-1}q_{j}^{*}, &   \quad j = 3,4;\\

    p_{i}C_{1}^{k}p_{1}^{*}\longmapsto e_{i1}(x^{2k})\longmapsto e_{i1}(x^{2k})\longmapsto e_{i1}(x^{2k})\longmapsto e_{i2}(x^{2k})\longmapsto q_{i}C_{2}^{k}q_{2}^{*}, & \quad i= 3,4;\\
    
   p_{i}C_{1}^{k}p_{j}^{*}\longmapsto e_{ij}(x^{2k})\longmapsto e_{ij}(x^{2k})\longmapsto e_{ij}(x^{2k})\longmapsto e_{ij}(x^{2k})\longmapsto q_{i}C_{2}^{k}q_{j}^{*},  &\quad  i,j= 3,4.
		\end{cases} \]
 Here $f$ and $\theta$ are given by \eqref{iso2}, $g$ is given by \eqref{2} , $h$ is given by \eqref{fff} and $\eta$ is given by \eqref{1}.

			Now we have an induced homomorphism 
			\begin{align*}
				\widetilde{\phi} : LI(F_{1}) &\longrightarrow LI(F_{2})\\
				p_{i}C_{1}^{k}p_{j}^{*} &\longmapsto \phi(	p_{i}C_{1}^{k}p_{j}^{*}). 
			\end{align*} between Leavitt inverse semigroups. Note that  $\widetilde{\phi}$ is a graded semigroup isomorphism.

The graphs $F_1$ and $F_2$ satisfy the combinatorial condition of Theorem  \ref{theorem}. The number of vertices in $F_1$ having relative depth $d$ ($d=0$ or $d=1$) with respect to 
$v_3$ is equal to the number of vertices in $F_2$ having relative depth $d$ with respect to $w_2$, as we have \begin{align*}|p_1| ({\rm mod }~ 2) \equiv 0, |p_2| ({\rm mod}~ 2) \equiv 1, |p_3| ({\rm mod}~ 2) \equiv 1, \text{~and~} |p_4| ({\rm mod }~ 2) \equiv 0;\\
|q_1| ({\rm mod }~ 2) \equiv 0, |q_2| ({\rm mod}~ 2) \equiv 1, |q_3| ({\rm mod}~ 2) \equiv 0, \text{~and~} |q_4| ({\rm mod }~ 2) \equiv 1.\end{align*}
		\end{example}

We give an example of two graphs below which do not satisfy the combinatorial condition of Theorem  \ref{theorem}.

\begin{example} Let $G_{1}$ and $G_{2}$  be the following two graphs.
			\begin{equation*}
				{\def\labelstyle{\displaystyle}
					\xymatrix{	\\
						G_1:&v_{1} \ar[r]^{e_{1}}& v_{2} \ar[r]^{e_{2}}& v_{3} \ar@/^/[r]^{e_3}  &v_4 \ar@/^/[l]^{e_4}\\			
				}}	
				\qquad \qquad
				{\def\labelstyle{\displaystyle}
					\xymatrix{	
						      	w_1 \ar[dr]^{f_{1}} & &\\
						      G_2:\qquad &w_3 \ar@/^/[r]^{f_3} & w_4 \ar@/^/[l]^{f_4}\\
							w_{2} \ar[ur]^{f_{2}} &&
				} }			
			\end{equation*}

The two graphs $G_1$ and $G_2$ do not satisfy the combinatorial condition of Theorem \ref{theorem} (3). Actually we have the following observation.
  Suppose that we choose $v_{3}$ in $G_{1}$ with the cycle $C_1=e_3e_4$ and remove the edge $e_{3}$. In the new graph, let $\left\lbrace v_{3}, e_{2}, e_4, e_{1}e_{2}\right\rbrace $ be the set of all path ending at $v_{3}$ with $p_{1} =v_{3}, p_{2} = e_{2}, p_3= e_4, p_{4}= e_{1}e_{2}$. By Lemma \ref{lemmmma 4.9}, we have $$L_K(G_1)\cong _{gr} M_4(K[x^2,x^{-2}])(0,1,1,2)$$
 and that $$|p_1| ({\rm mod }~ 2) \equiv 0, |p_2| ({\rm mod}~ 2) \equiv 1, |p_3| ({\rm mod}~ 2) \equiv 1, \text{~and~} |p_4| ({\rm mod }~ 2) \equiv 0.$$ Then the number of vertices in $G_1$ having relative depth $0$ with respect to $v_3$ is $2$ and  the number of vertices in $G_1$ having relative depth $1$ with respect to $v_3$ is $2$. Suppose that we choose $v_{4}$ in $G_{1}$ with the cycle $C_1=e_3e_4$ and remove the edge $e_{4}$. Similarly we have  $$L_K(G_1)\cong _{gr} M_4(K[x^2,x^{-2}])(0,1,2,3)$$
and that the number of vertices in $G_1$ having relative depth $0$ with respect to $v_4$ is $2$ and  the number of vertices in $G_1$ having relative depth $1$ with respect to $v_4$ is $2$.

For the graph $G_2$,
suppose that we choose $w_{3}$ in $G_{2}$ with the cycle $C_2=f_3f_4$ and remove the edge $f_{3}$. We have $L_K(G_2)\cong _{gr} M_4(K[x^2,x^{-2}])(0,1,1,1)$
 and that the number of vertices in $G_2$ having relative depth $0$ with respect to $w_3$ is $1$ and  the number of vertices in $G_2$ having relative depth $2$ with respect to $w_3$ is $3$. Suppose that we choose $w_{4}$ in $G_{2}$ with the cycle $C_2=f_3f_4$ and remove the edge $f_{3}$. We have $L_K(G_2)\cong _{gr} M_4(K[x^2,x^{-2}])(0,1,2,2)$
 and that the number of vertices in $G_2$ having relative depth $0$ with respect to $w_4$ is $3$ and  the number of vertices in $G_2$ having relative depth $2$ with respect to $w_4$ is $1$. Therefore Theorem \ref{theorem}(3) does not hold for $G_1$ and $G_2$. 

For algebraic structure we can prove $$L_K(G_1)\ncong _{gr} L_K(G_2).$$ 
 By Lemma \ref{keylemma}, we have 
$$L_K(G_1) \cong_{gr} M_{4}(K[x^{2},x^{-2}])(0,1,1,2)\\
				 \cong_{gr} M_{4}(K[x^{2},x^{-2}])(0,1,2,3)\\
				 \cong_{gr} M_{4}(K[x^{2},x^{-2}])(0,0,1,1)$$
	and $$	L_K(G_2) \cong_{gr} M_{4}(K[x^{2},x^{-2}])(0,1,2,2)\\
				\cong_{gr} M_{4}(K[x^{2},x^{-2}])(0,1,1,1).$$
	\cite[Lemma 3.1(2)]{r14} said that if the elements $\gamma_1, \gamma_2, \gamma_3, \gamma_4$ are considered modulo $2$ and arranged in a nondecreasing order, the resulting list is $l_0(0), l_1(1)$ for some nonnegative integers $l_0, l_1$ such that $4=l_0+l_1$. The integers $l_0, l_1$ are unique for the graded isomorphism class of $$M_4(K[x^2, x^{-2}])(\gamma_1, \gamma_2,\gamma_3, \gamma_4)$$ up to their order. Here $M_{4}(K[x^{2},x^{-2}])(0,0,1,1)$ and  $M_{4}(K[x^{2},x^{-2}])(0,1,1,1)$ are arranged in this way. But the elements $0,0,1,1$ and $0, 1,1,1$ are different. Therefore  $L_K(E_1)\ncong _{gr} L_K(E_2)$.
	\end{example}

\begin{corollary}
Let $E$ and $F$ be connected finite graphs whose vertices have out-degree at most 1. If $C_1$ and $C_2$ are cycles having length $1$ respectively in $E$ and $F$, then the following five statements are equivalent. 

(1) $L_{K}(E)\cong_{gr} L_{K}(F)$ as $\mathbb  Z$-graded Leavitt path algebras;

(2) $L_{K}(E)\cong L_{K}(F)$ as non-graded Leavitt path algebras;

(3) $LI(E)\cong LI(F)$ as non-graded Leavitt inverse semigroups;

(4)  $LI(E)\cong_{gr} LI(F)$ as $\mathbb  Z$-graded Leavitt inverse semigroups;

(5) $|E^0|=|F^0|$.
\end{corollary}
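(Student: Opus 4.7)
The plan is to reduce the five-way equivalence to two results already available: the main theorem of this paper (Theorem~\ref{theorem}) and \cite[Theorem 4.7]{r1}. First I would observe that Theorem~\ref{theorem} immediately supplies the equivalence (1)$\Leftrightarrow$(4), while \cite[Theorem 4.7]{r1}---which is in force since $E$ and $F$ are connected finite graphs whose vertices have out-degree at most $1$---supplies the equivalence chain (2)$\Leftrightarrow$(3)$\Leftrightarrow$(5). The downward passages (1)$\Rightarrow$(2) and (4)$\Rightarrow$(3) are immediate by forgetting the grading on the isomorphism. Thus, the only remaining link between the ``graded'' cluster $\{(1),(4)\}$ and the ``ungraded'' cluster $\{(2),(3),(5)\}$ is a single implication, for which I would choose (5)$\Rightarrow$(1).

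To establish (5)$\Rightarrow$(1), I would invoke Theorem~\ref{theorem} and verify that its combinatorial condition (3) holds under the hypothesis $|E^0|=|F^0|$ together with $|C_1|=|C_2|=1$. The key observation is that when $|C_1|=|C_2|=1$, the index set $\{0,1,\dots,|C_1|-1\}$ collapses to the singleton $\{0\}$, and the congruence $m \pmod 1 \equiv 0$ holds for every nonnegative integer $m$. Hence, after choosing the (unique) base vertex $v_0$ on the loop $C_1$ (and similarly $w_0$ on $C_2$), every vertex of $E$ has relative depth $0$ with respect to $v_0$, and every vertex of $F$ has relative depth $0$ with respect to $w_0$. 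Consequently the number of vertices of $E$ of relative depth $d$ with respect to $v_0$ equals $|E^0|$ for $d=0$ (and is vacuously matched for all other $d$), and similarly for $F$. Therefore Theorem~\ref{theorem}(3) reduces precisely to $|E^0|=|F^0|$, i.e.\ to statement~(5), and Theorem~\ref{theorem} then delivers (1).

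Assembling these pieces closes the circuit (1)$\Leftrightarrow$(4), (1)$\Rightarrow$(2)$\Leftrightarrow$(3)$\Leftrightarrow$(5), (5)$\Rightarrow$(1) and (4)$\Rightarrow$(3), which gives the five-way equivalence. There is no genuine technical obstacle here: the only point requiring attention is the trivialization of the modular arithmetic in Theorem~\ref{theorem}(3) when the cycle lengths are both $1$, which causes the refined combinatorial condition to degenerate into the pure vertex-count condition of \cite[Theorem 4.7]{r1}. This degeneration is precisely what explains why, in the length-$1$ cycle regime, the graded and ungraded classifications coincide.
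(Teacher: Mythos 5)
Your proposal is correct and follows essentially the same route as the paper: both reduce the statement to Theorem~\ref{theorem} together with \cite[Theorem 4.7]{r1}, the paper deriving (1)$\Leftrightarrow$(4)$\Leftrightarrow$(5) ``immediately'' from Theorem~\ref{theorem} and (2)$\Leftrightarrow$(3), (3)$\Rightarrow$(5) from \cite[Theorem 4.7]{r1}. The only difference is that you make explicit the point the paper leaves tacit, namely that when $|C_1|=|C_2|=1$ every vertex has relative depth $0$, so Theorem~\ref{theorem}(3) degenerates to the vertex-count condition (5); this is a correct and worthwhile clarification but not a different argument.
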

\begin{proof}
    The equivalences of (1), (4) and (5) follow immediately from Theorem \ref{theorem}. Obviously  (1) implies (2). It follows from \cite[Theorem 4.7]{r1} that (2) and (3) are equivalent and (3) implies (5). The proof is completed. 
\end{proof}

In Example \ref{cexample} below the two graphs $F$ and $F'$ both have a vertex whose out-degree is $2$. We will show that the graded isomorphism of algebras $L_K(F)\cong_{\rm gr}L_K(F')$ does not imply $LI(F)\cong_{\rm gr}LI(F')$.

		\begin{example}
  \label{cexample}
  Consider the following graphs.
			\begin{equation*}
				{\def\labelstyle{\displaystyle}
					\xymatrix{	\\
						& & v_{3}\\
						F		 : v_{1}\ar[r]^{e_{1}} &v_{2}\ar[ur]^{e_{2}} \ar[dr]^{e_{3}}&\\
						& & v_{4}\\
						\\
					} \qquad \qquad
				}
				{\def\labelstyle{\displaystyle}
					\xymatrix{	
						& & w_{3}\\
						& w_{2}\ar[ur]^{f_{2}}&\\
						F^{'}:	w_{1}\ar[ur]^{f_{1}} \ar[dr]^{f_{3}}&&\\
						& w_{4}\ar[dr]^{f_{4}}&\\
						&&w_{5}	\\		
					}
				} 						
			\end{equation*}		
			Referring to \cite[Theorem 4.14]{r9} and comparing with Lemma \ref{lemmmma 4.8}, we have that $$L_{K}(F) \cong _{gr} L_{K}(F^{'}) \cong_{gr}  M_{3}(K)(0,1,2) \bigoplus M_{3}(K)(0,1,2). $$ By Example $\ref{example1}$, the Leavitt inverse semigroup is  $$LI(F)= \left\lbrace 0,v_{1}, v_{2}, e_{1}^{*}, v_{3}, e_{2}^{*}, e_{2}^{*}e_{1}^{*}, v_{4}, e_{3}^{*}, e_{3}^{*}e_{1}^{*}, e_{1}, e_{2}, e_{2}e_{2}^{*}, e_{2}e_{2}^{*}e_{1}^{*}, e_{3}, 
			\atop e_{3}e_{3}^{*}, e_{3}e_{3}^{*}e_{1}^{*}, e_{1}e_{2}, e_{1}e_{2}e_{2}^{*}, e_{1}e_{2}e_{2}^{*}e_{1}^{*}, e_{1}e_{3}, e_{1}e_{3}e_{3}^{*}, e_{1}e_{3}e_{3}^{*}e_{1}^{*}	\right\rbrace.$$  
     Since every non-zero element of $LI(F^{'})$ is uniquely expressed as one of the form $(a)$ and $ (b)$ (refer to Lemma \ref{lem forms}), all the non-zero elements of $LI(F^{'})$ are 
			$$w_{1},
			w_{2}, f_{1}^{*},
			w_{3}, f_{2}^{*}, f_{2}^{*}f_{1}^{*},
			w_{4}, f_{3}^{*},$$
			$$w_{5}, f_{4}^{*}, f_{4}^{*}f_{3}^{*},
			f_{1}, f_{1}f_{1}^{*}, 
			f_{2}, 
			f_{1}f_{2},
			f_{3}, f_{3}f_{3}^{*},
			f_{4},
			f_{3}f_{4}.$$
			 The two Leavitt inverse semigroups $LI(F)$ and $LI(F')$ are not graded isomorphic, as they do not have the same number of elements.
		\end{example}

		Let $K$ be a ring and $S$ a semigroup. We recall \cite{r10} that the corresponding semigroup ring as $KS$, and the resulting contracted semigroup ring as $K_{0}S$, where the zero element of $S$ is identified with the zero of $KS$. That is, $K_{0}S= KS/I$, where $I$ is the ideal of $KS$ generated by the zero element of $S$. We denote an arbitrary element of $K_{0}S$ by $\sum _{s\in S}a^{(s)}s$ (or $\sum _{s\in S\backslash \left\lbrace 0\right\rbrace }a^{(s)}s$), where $a^{(s)} \in K$, and all but finitely many of the $a^{(s)}$ are zero.

		\begin{lemma}{\label{lem1} \cite[Theorem 4.10]{r1}}
			Let $E$ be a directed graph and $K$ a  field, then the Leavitt path algebra $L_{K}(E) $ is isomorphic to the algebra $ K_{0}LI(E)/\left\langle v-\sum_{e\in s^{-1}(v)}ee^{*}\right\rangle $ where $K_{0}LI(E)$ is the contracted semigroup algebra and for each $v\in E^{0}$ has out-degree at least 2 .	
		\end{lemma}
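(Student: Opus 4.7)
The plan is to exhibit mutually inverse algebra homomorphisms between $L_K(E)$ and the quotient algebra $A := K_0 LI(E)/J$, where $J$ is the ideal generated by the elements $v-\sum_{e\in s^{-1}(v)}ee^*$ as $v$ ranges over vertices of out-degree at least $2$. Both sides are presented by generators and relations, and the generating sets $E^0\cup E^1\cup (E^1)^*$ coincide, so the task reduces to verifying that each set of defining relations holds in the other algebra.

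First I would construct a map $\psi : K_0 LI(E) \to L_K(E)$. Because $LI(E)$ is a semigroup with zero generated by $E^0\cup E^1\cup (E^1)^*$ subject to relations (1)--(5) of Definition \ref{def1}, it suffices to check that those five relations hold inside $L_K(E)$ when the generators are interpreted as the corresponding generators there. Relations (1)--(4) are literally among the defining relations of the Leavitt path algebra. Relation (5), namely $v=ee^*$ for vertices $v$ of out-degree $1$, is the special case of the Cuntz--Krieger relation (6) when $s^{-1}(v)=\{e\}$. This gives a semigroup-with-zero homomorphism $LI(E)\to L_K(E)$ sending $0$ to $0$, and hence a $K$-algebra map $\psi$ out of the contracted semigroup algebra $K_0LI(E)$. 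Moreover, by relation (6) of $L_K(E)$ applied at non-sink vertices of out-degree $\geq 2$, $\psi$ kills every generator of the ideal $J$, so it descends to $\overline{\psi}:A\to L_K(E)$.

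Next I would construct the map in the opposite direction. The Leavitt path algebra $L_K(E)$ is presented by relations (1)--(4) together with the Cuntz--Krieger relation (6). Sending each generator $x\in E^0\cup E^1\cup (E^1)^*$ to its image $\overline{x}\in A$ gives a well-defined algebra homomorphism $\phi : L_K(E)\to A$ provided these relations hold in $A$. Relations (1)--(4) are already relations in $LI(E)$ and so hold in $K_0LI(E)$ and a fortiori in $A$. For relation (6) at a non-sink vertex $w$, we split into two cases: if $w$ has out-degree $1$ with $s^{-1}(w)=\{e\}$, then $w=ee^*$ already in $LI(E)$ by relation (5), so the identity $w=\sum_{e\in s^{-1}(w)}ee^*$ holds in $A$; if $w$ has out-degree at least $2$, the identity $w-\sum_{e\in s^{-1}(w)}ee^*=0$ holds in $A$ by construction of the quotient.

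Finally I would check that $\overline{\psi}$ and $\phi$ are mutually inverse. Both maps are identity on the common generating set $E^0\cup E^1\cup (E^1)^*$, and each side is generated as a $K$-algebra by these elements, so $\overline{\psi}\circ\phi$ and $\phi\circ\overline{\psi}$ both agree with the identity on generators and hence everywhere. The main subtlety I expect is not conceptual but bookkeeping: one must be careful, when passing through $K_0LI(E)$, that products of generators which vanish in $LI(E)$ (for instance $e^*f=0$ when $e\neq f$) are identified with the genuine $0$ of the algebra; this is exactly what the ``contracted'' construction $KS/\langle 0_S\rangle$ takes care of, and it is what guarantees that the semigroup-theoretic relations and the ring-theoretic relations match up when translating back and forth between $L_K(E)$ and $A$.
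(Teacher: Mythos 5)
Your argument is correct, and it is the standard presentation-comparison proof: the paper itself gives no proof of this lemma but simply cites \cite[Theorem 4.10]{r1}, and your two mutually inverse maps (relations (1)--(4) shared, relation (5) as the out-degree-one case of the Cuntz--Krieger relation (6), and the ideal $\left\langle v-\sum_{e\in s^{-1}(v)}ee^{*}\right\rangle$ supplying (6) at vertices of out-degree at least $2$) is exactly the argument underlying that cited result. Your remark about the contracted semigroup algebra identifying the semigroup zero with the algebra zero is the right point of care, so nothing is missing.
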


 The contracted semigroup algebra $K_{0}LI(E)$ has the natural induced $\mathbb Z$-grading as $LI(E)$ is $\mathbb Z$-graded. And the Leavitt path algebra $L_{K}(E)$ is  $\mathbb Z$-graded via the length of paths in $E$. One has that $L_{K}(E) $ is $\mathbb Z$-graded 
 isomorphic to $ K_{0}LI(E)$.

		\begin{proposition}
			\label{thm111}
			Let $E$ and $F$ be connected graphs, $K$ a field and $\mathbb  Z$ an integer group. If $LI(E) \cong_{gr} LI(F)$ as $\mathbb  Z$-graded semigroups, then $L_{K}(E) \cong_{gr} L_{K}(F)$ as $\mathbb  Z$-graded Leavitt path algebras.
		\end{proposition}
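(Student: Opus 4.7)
The plan is to extend the graded semigroup isomorphism $\varphi:LI(E)\to LI(F)$ by $K$-linearity to a graded $K$-algebra isomorphism $\tilde\varphi:K_{0}LI(E)\to K_{0}LI(F)$; this is well-defined because the nonzero elements of $LI(E)$ form a $K$-basis of $K_{0}LI(E)$ compatible with the $\mathbb{Z}$-grading, and $\varphi$ is a degree-preserving bijection respecting products. By Lemma~\ref{lem1}, $L_{K}(E)\cong_{\rm gr} K_{0}LI(E)/I_{E}$ and $L_{K}(F)\cong_{\rm gr} K_{0}LI(F)/I_{F}$, where $I_{E}$ (respectively $I_{F}$) is the two-sided ideal generated by the relators $v-\sum_{e\in s^{-1}(v)}ee^{*}$ for $v\in E^{0}$ (respectively $F^{0}$) of out-degree at least $2$. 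The problem then reduces to showing $\tilde\varphi(I_{E})=I_{F}$.

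As a preliminary step I will verify that $\varphi$ restricts to a bijection $E^{0}\to F^{0}$. Using Lemma~\ref{lem forms}, every nonzero degree-$0$ idempotent of $LI(E)$ is either a vertex $v\in E^{0}$ or an element of the form $pee^{*}p^{*}$ where $s(e)=r(p)$ has out-degree $\geq 2$; the latter satisfies $pee^{*}p^{*}<s(p)$ in the natural partial order on idempotents, while two distinct vertices are mutually orthogonal. Hence the vertices are intrinsically characterised as the maximal nonzero degree-$0$ idempotents, a property preserved by $\varphi$. A similar analysis shows that for any $v\in E^{0}$ of out-degree $\geq 2$, the set $\{ee^{*}:e\in s^{-1}(v)\}$ is exactly the set of maximal nonzero degree-$0$ idempotents strictly below $v$.

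Now fix $v\in E^{0}$ of out-degree $n\geq 2$ and set $w:=\varphi(v)\in F^{0}$. Then $\{\varphi(ee^{*}):e\in s^{-1}(v)\}$ is the set of $n$ pairwise orthogonal maximal proper degree-$0$ subidempotents of $w$. Since $w$ has at least two such subidempotents, iterating the unique outgoing edge from $w$ through out-degree-$1$ vertices must eventually reach some $u\in F^{0}$ of out-degree $\geq 2$ (otherwise $w$ would possess no proper degree-$0$ subidempotent at all); let $p$ be the resulting NE path from $w$ to $u$, taken to be empty when $w=u$. Iterating relation~(5) in $LI(F)$ yields $w=pp^{*}$, and the maximal proper degree-$0$ subidempotents of $w$ are then precisely $\{pgg^{*}p^{*}:g\in s^{-1}(u)\}$. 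Consequently,
\begin{equation*}
\tilde\varphi\Bigl(v-\sum_{e\in s^{-1}(v)}ee^{*}\Bigr)
=w-\sum_{g\in s^{-1}(u)}pgg^{*}p^{*}
=p\Bigl(u-\sum_{g\in s^{-1}(u)}gg^{*}\Bigr)p^{*}\in I_{F}.
\end{equation*}
Applying the symmetric argument to $\varphi^{-1}$ gives $\tilde\varphi(I_{E})=I_{F}$, so $\tilde\varphi$ descends to the desired graded algebra isomorphism $L_{K}(E)\cong_{\rm gr}L_{K}(F)$.

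The main obstacle is the partial-order/structural analysis identifying $\{\varphi(ee^{*})\}$ with $\{pgg^{*}p^{*}\}$: because $w$ need not itself have out-degree $\geq 2$, one must permit a possibly nontrivial NE-chain $p$ from $w$ to an out-degree-$\geq 2$ vertex $u$ and then carefully rule out the degenerate scenarios in which the outgoing chain from $w$ ends at a sink or loops into a pure out-degree-$1$ cycle, both of which would leave $w$ without proper degree-$0$ subidempotents and contradict $n\geq 2$.
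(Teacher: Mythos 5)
Your proposal is correct and follows essentially the same route as the paper: extend the graded semigroup isomorphism $K$-linearly to the contracted semigroup algebras $K_0LI(E)\to K_0LI(F)$ and pass to the quotients provided by Lemma~\ref{lem1} after showing the ideals generated by the relators $v-\sum_{e\in s^{-1}(v)}ee^{*}$ correspond. The only difference is that where the paper justifies $\widetilde{\phi}(I_1)=I_2$ by citing the proof of \cite[Theorem 4.10]{r1}, you give a direct (and sound) order-theoretic argument — vertices as the maximal degree-zero idempotents, $\{\varphi(ee^{*}):e\in s^{-1}(v)\}$ identified with $\{pgg^{*}p^{*}:g\in s^{-1}(u)\}$ along an NE path with $w=pp^{*}$ — so that each relator is sent to $p\bigl(u-\sum_{g\in s^{-1}(u)}gg^{*}\bigr)p^{*}\in I_F$.
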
	
		
		\begin{proof}
			By Lemma $\ref{lem1}$, we see that $L_{K}(E)$ is isomorphic to the quotient of the contracted semigroup algebra $K_{0}LI(E)$ of $LI(E)$ by the ideal $I_{1}$ generated by elements $v-\sum _{e\in s^{-1}(v)}ee^{*}$ for $v\in E^{0}$ with the out-degree of $v$ at least 2. Similarly, $L_{K}(F)$ is isomorphic to the quotient of the contracted semigroup algebra $K_{0}LI(F)$ of $LI(F)$ by the ideal $I_{2}$ generated by elements $w-\sum _{f\in s^{-1}(w)}ff^{*}$ for $w\in F^{0}$ with the out-degree of $v$ at least 2. By \cite[Theorem 4.10]{r1}, we suppose $\phi$ is a $\mathbb  Z$-graded semigroup isomorphism from $LI(E)$ onto $LI(F)$, then  we have the following induced map
			\begin{align*}
				\widetilde{\phi}: K_{0}LI(E) &\longrightarrow K_{0}LI(F)\\
				pq^{*} &\longmapsto \phi (pq^{*}).
			\end{align*} Then we have that $\widetilde{\phi}$ is a $\mathbb  Z$-graded algebra isomorphism. By the proof of \cite[Theorem 4.10]{r1} we have $\widetilde{\phi}(I_1)=I_2$. One observes that $I_1$ and $I_2$ are both generated by homogeneous elements. Hence $$K_{0}LI(E)/I_{1} \cong_{gr} K_{0}LI(F)/I_{2}  $$            as $\mathbb  Z$-graded algebras.  Therefore we have    the $\mathbb  Z$-graded isomorphisms  $$L_K(E)\cong_{gr} K_{0}LI(E)/I_{1} \cong_{gr} K_{0}LI(F)/I_{2} \cong_{gr} L_K(F). $$     
		\end{proof} 

  \section{Acknowledgements} 
 This work is supported by the Natural Science Foundation of Anhui Province (No. 2108085QA05), the National Natural Science Foundations of China (Nos. 12101001, 12371015 and 12271442), the Excellent University Research and Innovation Team in Anhui Province (No. 2024AH010002),  and the Fundamental Research Funds for the Central Universities (SWU-XDJH202305).
		
		 {}
	\end{document}